\renewcommand{\natural}{{\mathbb{N}}}
\newcommand{\naturalzero}{\mathbb{N}_0}
\newcommand{\real}{{\mathbb{R}}}
\newcommand{\setdef}[2]{\{#1 \; | \; #2\}}
\newcommand{\NN}{\mathcal{N}}
\newcommand{\RR}{\mathcal{R}}
\newcommand{\until}[1]{\{1,\dots,#1\}}
\newcommand{\fromto}[2]{\{#1,\dots,#2\}}
\newcommand{\degree}{\textup{\text{d}}}
\newcommand{\Lu}{N}
\newcommand{\Lm}{M}
\newcommand{\eqmod}[1]{{\buildrel\rm mod \, #1\over=}}
\newtheorem{theorem}{Theorem}[section]
\newtheorem{proposition}[theorem]{Proposition}
\newtheorem{corollary}[theorem]{Corollary}
\newtheorem{lemma}[theorem]{Lemma}
\newtheorem{remark}[theorem]{Remark}
\newcommand\oprocendsymbol{\hbox{$\square$}}
\newcommand\oprocend{\relax\ifmmode\else\unskip\hfill\fi\oprocendsymbol}
\begin{document}

\title{On the reachability and observability\\ of path and cycle graphs
  \thanks{An early short version of this work appeared as~\cite{GP-GN:10b}:
    differences between this early short version and the current article include
    the reachability analysis, a much improved comprehensive and thorough
    treatment, revised complete proofs for all statements.}  }

\author{Gianfranco Parlangeli \and Giuseppe Notarstefano\thanks{The research
    leading to these results has received funding from the European Community's
    Seventh Framework Programme (FP7/2007-2013) under grant agreement no. 224428
    (CHAT) and n. 231378 (CO3AUV) and from the national project ``Sviluppo di
    nuovi metodi e algoritmi per l'identificazione, la stima bayesiana e il
    controllo adattativo e distribuito''.}  \thanks{Gianfranco Parlangeli and
    Giuseppe Notarstefano are with the Department of Engineering, University of
    Lecce, Via per Monteroni, 73100 Lecce, Italy,
    \texttt{\{gianfranco.parlangeli, giuseppe.notarstefano\}@unile.it}}}

\maketitle

\begin{abstract}
  In this paper we investigate the reachability and observability properties of
  a network system, running a Laplacian based average consensus algorithm, when
  the communication graph is a path or a cycle. More in detail, we provide
  necessary and sufficient conditions, based on simple algebraic rules from
  number theory, to characterize all and only the nodes from which the network
  system is reachable (respectively observable). Interesting immediate
  corollaries of our results are: (i) a path graph is reachable (observable)
  from any single node if and only if the number of nodes of the graph is a
  power of two, $n=2^i, i\in \natural$, and (ii) a cycle is reachable
  (observable) from any pair of nodes if and only if $n$ is a prime number. For
  any set of control (observation) nodes, we provide a closed form expression
  for the (unreachable) unobservable eigenvalues and for the eigenvectors of the
  (unreachable) unobservable subsystem.
\end{abstract}

\section{Introduction}

Distributed computation in network control systems has received great attention
in the last years. One of the most studied problems is the \emph{consensus
  problem}. Given a network of processors, the task of reaching consensus
consists of computing a common desired value by performing local computation and
exchanging local information. A variety of distributed algorithms for diverse
system dynamics and consensus objectives has been proposed in the literature.

We are interested in two problems that may arise in a network running a
consensus algorithm when only a subset of nodes is controlled by an external
input or measured by an external processor. Namely, is it possible to reach all
the node configurations just controlling a limited number of nodes?
Respectively, is it possible to reconstruct the entire network state just
knowing the state of a limited number of nodes?

In this paper we will concentrate on a first-order network system running a
Laplacian based average consensus algorithm with fixed communication graph
topology of path or cycle type. Average consensus has been widely studied in the
last years. Several distributed feedback laws have been proposed. A survey on
these algorithms and their performance may be found e.g. in
\cite{ROS-JAF-RMM:07} and references therein. The dynamical system arising from
a consensus network with fixed topology is a linear time-invariant system. The
problem of understanding if all the node configurations can be reached by
controlling a given subset of nodes is a reachability problem. Respectively, the
problem of understanding if the entire network state may be reconstructed is an
observability problem.

We organize the relevant literature in two areas. First, the reachability
problem in a first-order network arises in network systems where all (or most
of) the nodes run a linear average consensus algorithm, while a subset of them
can be driven by an exogenous input. These networks are often called
leader-follower networks in the sense that the nodes controlled by an external
input are leaders that drive the followers to desired configurations. The
reachability (controllability) problem for a leader-follower network was
introduced in \cite{HGT:04} for a single control node.
Intensive simulations were provided showing that it is ``unlikely'' for a
Laplacian based consensus network to be completely controllable.
In \cite{AR-MJ-MM-ME:09} and \cite{SM-ME-AB:09}, necessary conditions for
controllability, based on suitable properties of the graph, have been
provided. The conditions rely on algebraic graph tools based on the notion of
equitable partitions of a graph. A more exhaustive analysis of the
controllability and other structural system properties for network systems on
the basis of graph structural properties can be found in \cite{MM-ME:10}.
In \cite{BL-TC-LW-GX:08} preliminary results are given for the controllability
of first-order network with switching communication topology applied to the
formation control problem.
In \cite{ZJ-ZW-HL-ZW:10} sufficient conditions for controllability in first and
second order multi-agent systems with delay are given. The conditions are based
on the eigenstructure of the network system and delay matrices.
In \cite{ZJ-HL-THL-QL:10} necessary conditions and sufficient conditions for the
controllability of tree graphs are given, based on the eigenvalues of suitable
subgraphs.
A first contribution to the controllability of multi-agent systems with
nonlinear dynamics is provided in \cite{WH-LP-AB:10}, where the controllability
of pairs of identical nonholonomic vehicles maintaining a constant distance is
studied.

Second, observability for a network system running an average consensus
algorithm has been studied for the first time in \cite{MJ-ME:07}. In that paper
the authors provide necessary conditions for observability. The conditions are
based on equitable partitions of a graph as in the reachability setting
investigated in \cite{AR-MJ-MM-ME:09} and \cite{SM-ME-AB:09}.
A recent reference on observability for network dynamic systems is
\cite{DZ-MM:08}. Here, the linear dynamical systems of the network are decoupled
and the coupling among the systems appears through the
output.
A parallel research line investigates a slightly different property called
\emph{structural observability} \cite{SS-CH:08}. Here, the objective is to
choose the nonzero entries of the consensus matrix (i.e. the state matrix of the
resulting network system) in order to obtain observability from a given set of
nodes. However, in many contexts the structure of the system matrix is given
(e.g. the Laplacian for average
consensus).

It is worth noting that, the observability property is an important property in
distributed estimation, \cite{VG:06, MF-GFT-RS:09}, and intrusion detection
problems \cite{FP-AB-FB:10, SS-CNH:11} for steady state analysis. In the
literature this property is often assumed or considered as non generic.
Finally, preliminary results on the controllability and observability of path
and cycle graphs were given in \cite{RL-MWS-JAG-NC:08}, where, using these
properties, a formation control strategy was proposed.

The contribution of the paper is twofold. First, we provide \emph{necessary and
  sufficient} conditions based on simple algebraic relations from number theory
that completely characterize the reachability (observability) of path and cycle
graphs. More in detail, on the basis of the node labels and the total number of
nodes in the graph we are able to: (i) identify all and only the reachable
(observable) nodes of the graph, (ii) say if the graph is reachable (observable)
from a given set of nodes and (iii) construct a set of control (observation)
nodes from which the graph is reachable (observable).  Interesting immediate
corollaries of our results are: (i) a path graph is reachable (observable) from
any single node if and only if the number of nodes of the graph is a power of
two, $n=2^i, i\in \natural$, (ii) a path graph is reachable (observable) from a
single node $i$ if and only if there is no odd prime factor $p$ of $n$ such that
$(n-i)= (i-1) + \alpha p$ for some integer $\alpha$; (iii) a cycle graph is
reachable (observable) from any pair of nodes if and only if $n$ is a prime
number, and (iv) a cycle graph is reachable (observable) from two nodes, say
$i_1$ and $i_2$, if and only if $i_2-i_1$ and $n+i_1-i_2$ are coprime. Thus,
e.g., any cycle is observable from two adjacent nodes.

Second, we provide a closed form expression for the unreachable (unobservable)
eigenvalues and eigenvectors, and characterize the orthogonal complement to the
reachable subspace (respectively the unobservable subspace) for any unreachable
(unobservable) set of nodes. This result is based on the complete
characterization of the spectrum of suitable submatrices of the path and cycle
Laplacians. As a consequence of these linear algebra results, we also provide a
closed form for all the Laplacian eigevalues of a path graph. At the best of our
knowledge both the characterization of the Laplacian eigenvalues and the
mathematical tools used to characterize them are new.

The paper is organized as follows. In Section~\ref{sec:prelims} we introduce
some preliminary definitions and properties of undirected graphs, describe the
network model used in the paper and set up the reachability and observability
problems.
In Section~\ref{sec:cycle-path_spectrum} we provide a complete characterization
of the eigenvalues and eigenvectors of the Laplacian of a path graph and other
matrices used to study the path and cycle reachability (observability). Finally,
in Section~\ref{sec:main_results} we provide a complete characterization of the
path and cycle graph reachability (observability), and provide some useful
example explaining the main results.

\paragraph*{Notation}
We let $\natural$, $\naturalzero$, and $\real_{\geq0}$ denote the natural
numbers, the non-negative integer numbers, and the non-negative real numbers,
respectively. We denote $0_{d}$, $d\in\natural$, the vector of dimension $d$
with zero components and $0_{d_1\times d_2}$, $d_1, d_2\in \natural$, the matrix
with $d_1$ rows and $d_2$ columns with zero entries. For $i\in\natural$ we let
$e_i$ be the $i$-th element of the canonical basis, e.g. $e_1 = [1~ 0~ \ldots~
0]^T$. For a matrix $A\in\real^{d_1\times d_2}$ we denote $[A]_{ij}$ the
$(i,j)th$ element and $[A]_{i}$ the $i$th column of $A$. For a vector
$v\in\real^d$ we denote $(v)_i$ the $i$th component of $v$ so that $v = [(v)_1
\ldots (v)_d]^T$. Also, we denote $\Pi \in \real^{d\times d}$ the permutation
matrix reversing all the components of $v$ so that $\Pi v = [(v)_d \ldots
(v)_1]^T$ (the $j$-th column of $\Pi$ is $[\Pi]_j= e_{n-j+1}$).
Adopting the usual terminology of number theory, we will say that $k$
\textsl{divides} a nonzero integer $m$ (written $k|m$) if there is an integer
$q$ with the property that $m=kq$. When this relation holds, $k$ is said a
\textsl{factor} or \textsl{divisor} of $m$.  If two integers $b$ and $c$ satisfy
for a given $m$ the relation $m|(b-c)$ then we say that $b$ is congruent to $c$
modulo $m$ (written $b = c$ mod($m$) or equivalently $b \eqmod{m} c$).  The
\textsl{greatest common divisor} of two positive integers $a$ and $b$ is the
largest divisor common to $a$ and $b$, and we will denote it $GCD(a,b)$.  The
greatest common divisor can also be defined for three or more positive integers
as the largest divisor shared by all of them. Two or more positive integers that
have greatest common divisor $1$ are said \textsl{relatively prime} or
\textsl{coprime}.
A \textsl{prime number} is a positive integer that has no positive integer
divisors other than $1$ and itself. Every natural number $n$ admits a
\textsl{prime factorization} (Fundamental Theorem of Arithmetic), i.e. we can
factorize $n$ as $n = 2^{n_0} \prod_{\alpha = 1}^l p_\alpha$, where
$n_0\in\natural$ and each $p_\alpha$ is an odd prime number. Notice that in our
factorization we allow two or more factors $p_{\alpha}$ to be equal.


\section{Preliminaries and problem set-up}
\label{sec:prelims}
In this section we present some preliminary terminology on graph theory,
introduce the network model, set up the reachability and observability problems,
and provide some standard results on reachability (observability) of linear
systems that will be useful to prove the main results of the paper.

\subsection{Preliminaries on graph theory}
Let $G = (I, E)$ be a static undirected graph with set of nodes $I=\until{n}$
and set of edges $E\subset I\times I$. We denote $\NN_i$ the set of neighbors of
agent $i$, that is, $\NN_i = \setdef{j\in I}{(i,j)\in E}$, and $\degree_i =
\sum_{j\in \NN_i} 1$ the degree of node $i$. The maximum degree of the graph is
defined as $\Delta = \max_{i\in I} \degree_i$. The degree matrix $D$ of the
graph $G$ is the diagonal matrix defined as $[D]_{ii} = \degree_i$.
The adjacency matrix $A \in \real^{n\times n}$ associated to the graph $G$ is
defined as
\[
\left[ A \right]_{ij} =
\begin{cases}
  1 & \text{if}~ (i,j)\in E\\
  0 & \text{otherwise}.
\end{cases}
\]
The Laplacian $L$ of $G$ is defined as $L = D - A$. The Laplacian is a symmetric
positive semidefinite matrix with $k$ eigenvalues in $0$, where $k$ is the
number of connected components of $G$. If the graph is connected the eigenvector
associated to the eigenvalue $0$ is the vector $\mathbf{1} = [1~ \ldots~ 1]^T$.

We introduce the two undirected graphs that will be of interest in the rest of
the paper, namely the path and cycle graphs.
A \emph{path graph} is a graph in which there are only nodes of degree two
except for two nodes of degree one.
The nodes of degree one are called external nodes, while the other are called
internal nodes.
From now on, without loss of generality, we will label the external nodes with
$1$ and $n$, and the internal nodes so that the edge set is $E = \{(i,i+1) \; |
\; i\in\fromto{1}{n-1}\}$.
A \emph{cycle graph} is a graph in which all the nodes have degree two. From now
on, without loss of generality, we will label the nodes so that the edge set is
$E = \{(i,i\,\text{mod}(n) + 1) \; | \; i\in\fromto{1}{n}\}$.

\subsection{Network of agents running average consensus}
\label{sec:network_model}
We consider a collection of agents labeled by a set of identifiers $I =
\until{n}$, where $n\in\natural$ is the number of agents. We assume that the
agents communicate according to a \emph{time-invariant undirected} communication
graph $G = (I, E)$, where $E = \setdef{(i,j)\in I\times I}{i~ \text{and}~ j~
  \text{communicate}}$. That is, we assume that the communication between any
two agents is bi-directional.
The agents run a consensus algorithm based on a Laplacian control law (see
e.g. \cite{ROS-JAF-RMM:07} for a survey). The dynamics of the agents evolve in
continuous time ($t\in\real_{\geq0}$) and are given by
\begin{equation*}
  \begin{split}
    \dot{x}_i(t) = - \sum_{j\in \NN_i} (x_i(t) - x_j(t)), ~ i \in \until{n}.
  \end{split}
\end{equation*}
Using a compact notation the dynamics may be written as
\begin{equation*}
  \dot{x}(t) = -L x(t), ~t\in\real_{\geq0},
\end{equation*}
where $x = [(x)_1 \ldots (x)_n]^T = [x_1 \ldots x_n]^T$ is the vector of the
agents' states and $L$ is the graph Laplacian.

\begin{remark}[Descrete time system]
  In discrete time, we can consider the following dynamics
  \begin{equation*}
    \begin{split}
      x_i(t+1) = x_i(t) - \epsilon \sum_{j\in \NN_i} (x_i(t) - x_j(t)), ~ i \in
      \until{n},
    \end{split}
  \end{equation*}
  where $\epsilon \in \real$ is a given parameter. A compact expression for the
  dynamics is
  \begin{equation*}
    x(t+1) = (I - \epsilon L) x(t), ~t\in\naturalzero.
  \end{equation*}
  For $\epsilon \in (0, 1 / \Delta)$ ($\Delta$ is the maximum degree of the
  graph), $P$=$(I - \epsilon L)$ is a nonnegative, doubly stochastic, stable
  matrix.
	
  It can be easily shown that the continuous and discrete time systems have the
  same reachability and observability properties (namely the same unreachable
  and unobservable eigenvalues and eigenvectors). Therefore, the results shown
  in the paper also hold in this discrete time set-up. \oprocend
\end{remark}

\subsection{Network reachability and observability}
\label{sec:probl_statement}
In this section we describe the mathematical framework that we will use to study
the reachability and observability of a network system.
We start by describing the scenarios that motivate our work.
As regards the reachability problem, we imagine that in a network of agents
running average consensus as in Section~\ref{sec:network_model}, a subset of
nodes can be controlled by an external input. In the literature these nodes are
often called \emph{leader nodes} or \emph{pinned nodes}. The idea is that they
are special nodes with higher computation capabilities so that more
sophisticated control laws can be designed.
Let $I_\ell = \{i_1, \ldots, i_m\}\subset\until{n}$ be the set of control nodes,
the dynamical system modeling this scenario is
\begin{equation*}
  \dot{x}(t) = -L x(t) + B u(t),
  \label{eq:lead_follow_net}
\end{equation*}
where $u(t) = [u_{i_1}(t), \ldots, u_{i_m}(t)]^T$ is the input and $B = [e_{i_1}
\;| \; \ldots \;|\; e_{i_m}]$.

As regards the observability problem, we imagine that an external processor
(\emph{not} running the consensus algorithm) collects information from some
nodes in the network. We call these nodes \emph{observation nodes}. In
particular, we assume that the external processor may read the state of each
observation node.
Equivalently, we can think of one or more observation nodes, running the
consensus algorithm, that have to reconstruct the state of the network by
processing only their own
state.
We can model these two scenarios with the following mathematical
framework.
For each observation node $i \in I$, we have the following output
\begin{equation*}
  y_i(t) =  x_i(t).
\end{equation*}
Given the set of observation nodes $I_o=\{i_1, \ldots, i_m\}\subset\until{n}$,
the output is $y(t) =
\begin{bmatrix}
  x_{i_{1}}(t) & x_{i_{2}}(t)& \ldots & x_{i_{m}}(t)
\end{bmatrix}^T$.
Therefore, the system dynamics is given by
\begin{equation*}
  \begin{split}
    \dot{x}(t) &= -L x(t),\\
    y(t) &= C x(t),
    \label{eq:oberv_net}
  \end{split}
\end{equation*}
where the output matrix is $C = [e_{i_1} \;| \; \ldots \;|\; e_{i_m}]^T$.

It is a well known result in linear systems theory that the reachability
properties of the pair $(L, B)$ are the same as the observability properties of
the pair $(L, C) = (L^T, B^T) = (L, B^T)$.

\begin{remark}[Duality and regulator design]
  Due to the symmetry of the state matrix $L$, the network is reachable from a
  given subset of nodes if and only if it is observable from it. This important
  property allows, for example, to design a regulator at a subset of
  (control/observation) nodes that estimates the entire network state and
  controls it to a desired configuration. This has, for example, certainly an
  impact on security issues. \oprocend
\end{remark}

\begin{remark}[Equivalence with other problem set-ups]
  Straightforward results from linear system theory can be used to prove that
  the controllability problem studied, e.g., in \cite{AR-MJ-MM-ME:09} and
  \cite{SM-ME-AB:09} and the dual observability problem studied in
  \cite{MJ-ME:07} can be equivalently formulated in our set up. \oprocend
\end{remark}

\subsection{Standard results on reachability and observability of linear
  systems}
The reachability problem consists of looking for those states that can be
reached in finite time from the origin. Respectively, the observability problem
consists of looking for nonzero values of $x(0)$ that produce an identically
zero output $y(t)$. Using known results in linear system theory the two problems
are equivalent to studying the rank of the reachability matrix,
\[
\mathcal{R}_{n} = \left[ B \; | \; L B \; | \; \ldots \; | \; L^{n-1} B \right],
\]
respectively of the observability matrix
\[
\mathcal{O}_{n} =
\begin{bmatrix}
  C \\
  C L \\
  \vdots \\
  C L^{n-1}
\end{bmatrix}.
\]
The image $X_r$ of $\RR_n$ is the \emph{reachable subspace}, i.e. the set of
states that are reachable from the origin. The kernel $X_{no}$ of
$\mathcal{O}_n$ is the \emph{unobservable subspace}, i.e. the set of initial
states that produce an identically zero output.

Here, we recall an interesting result on the reachability (observability) of
time-invariant linear systems known as Popov-Belevich-Hautus (PBH) lemma.
\begin{lemma}[PBH lemma]
  Let $A \in \real^{n\times n}$, $B \in \real^{n\times m}$ and
  $C\in\real^{m\times n}$, $n,m \in \natural$, be the state, input and output
  matrices of a linear time-invariant system. The pair $(A, B)$ is reachable if
  and only if
  \begin{equation*}
    \text{rank}
    \begin{bmatrix}
      A - \lambda I \; | \; B
    \end{bmatrix}
    = n,
  \end{equation*}
  for all $\lambda \in \mathbb{C}$. Respectively the pair $(A, C)$ is
  observable, if and only if
  \begin{equation*}
    \text{rank}
    \begin{bmatrix}
      C\\
      A - \lambda I\\
    \end{bmatrix}
    = n,
  \end{equation*}
  for all $\lambda \in \mathbb{C}$.\oprocend
\end{lemma}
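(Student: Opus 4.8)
The plan is to establish the reachability characterization directly and then obtain the observability one by the duality noted just before the lemma. First I would invoke the standard fact (recalled in the excerpt) that $(A,B)$ is reachable if and only if $\operatorname{rank}\RR_n = n$, where $\RR_n = [B \;|\; AB \;|\; \cdots \;|\; A^{n-1}B]$, so it suffices to relate the rank of $[A-\lambda I \;|\; B]$ across all $\lambda$ to the rank of $\RR_n$.

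For the easy (sufficiency) direction I would argue the contrapositive: suppose there exists $\lambda\in\mathbb{C}$ with $\operatorname{rank}[A-\lambda I \;|\; B] < n$. Then the $n$ rows are linearly dependent, so there is a nonzero row vector $w\in\mathbb{C}^{1\times n}$ with $w(A-\lambda I)=0$ and $wB=0$; that is, $w$ is a left eigenvector, $wA=\lambda w$, annihilating $B$. Consequently $wA^kB = \lambda^k wB = 0$ for every $k\geq 0$, whence $w\RR_n = 0$ and $\operatorname{rank}\RR_n < n$, so the system is not reachable.

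For the converse (necessity) I would again argue the contrapositive, and this is the step I expect to be the main obstacle. Assuming $(A,B)$ is not reachable, the reachable subspace $X_r = \operatorname{Im}\RR_n$ is a proper subspace of $\mathbb{C}^n$. The key structural facts are that $X_r$ is $A$-invariant and contains $\operatorname{Im}B$; choosing a basis whose first $\dim X_r$ vectors span $X_r$ transforms the pair, via a similarity, into the block-triangular form
\[
\tilde A = \begin{bmatrix} A_{11} & A_{12} \\ 0 & A_{22}\end{bmatrix}, \qquad \tilde B = \begin{bmatrix} B_1 \\ 0 \end{bmatrix},
\]
where $A_{22}$ has size $n-\dim X_r \geq 1$. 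Taking $\lambda$ to be any eigenvalue of $A_{22}$ and $v$ a corresponding left eigenvector, the row vector $w = [\,0 \;|\; v^T\,]$ satisfies $w(\tilde A - \lambda I)=0$ and $w\tilde B = 0$, so $\operatorname{rank}[\tilde A - \lambda I \;|\; \tilde B] < n$. Since this rank is unchanged by the similarity transformation, $\operatorname{rank}[A-\lambda I \;|\; B] < n$ for this $\lambda$, which is exactly the failure of the stated condition. Establishing the invariance of $X_r$ and the clean block form is the crux; the rest is routine linear algebra.

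Finally, the observability statement follows by duality: $(A,C)$ is observable if and only if $(A^T,C^T)$ is reachable. Applying the reachability result just proved to $(A^T,C^T)$ gives that observability is equivalent to $\operatorname{rank}[A^T-\lambda I \;|\; C^T] = n$ for all $\lambda$. Transposing this matrix and using that transposition preserves rank together with $(A^T-\lambda I)^T = A-\lambda I$ yields $\operatorname{rank}\begin{bmatrix} C \\ A-\lambda I \end{bmatrix} = n$ for all $\lambda$, as claimed.
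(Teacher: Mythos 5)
Your argument is correct, but note that the paper itself offers no proof of this lemma: it is explicitly \emph{recalled} as a standard result from linear systems theory (the statement is closed with an end-of-proof symbol and no argument), so there is no in-paper proof to compare against. What you give is the classical textbook proof of the PBH test, and it is sound. The sufficiency direction via a left eigenvector $w$ with $wA=\lambda w$, $wB=0$, hence $wA^kB=\lambda^k wB=0$ and $w\RR_n=0$, is exactly right. For necessity, the one step you flag as the crux --- that $X_r=\operatorname{Im}\RR_n$ is $A$-invariant and contains $\operatorname{Im}B$ --- is indeed the only nontrivial ingredient, and it follows from Cayley--Hamilton: $A\RR_n=[AB\,|\,\cdots\,|\,A^nB]$ and $A^nB$ is a linear combination of $B,AB,\dots,A^{n-1}B$, so $A X_r\subseteq X_r$; you should say this explicitly rather than defer it. Your observation that the rank of $[\tilde A-\lambda I\,|\,\tilde B]$ equals that of $[A-\lambda I\,|\,B]$ because the two differ by left multiplication by $T^{-1}$ and right multiplication by the invertible block-diagonal matrix $\operatorname{diag}(T,I)$ is correct, and the duality step transposing $[A^T-\lambda I\,|\,C^T]$ to obtain the observability form is routine. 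One small point worth a sentence in a polished write-up: the matrices are real but $\lambda$ ranges over $\mathbb{C}$, so the left eigenvectors you use live in $\mathbb{C}^{1\times n}$; this is harmless because the rank of a real matrix is the same over $\real$ and $\mathbb{C}$, but it deserves mention since the reachability matrix criterion is usually stated over $\real$.
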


Combining the PBH lemma with the fact that the state matrix is symmetric (and
therefore diagonalizable) the following corollary may be proven.

\begin{corollary}[PBH lemma for symmetric matrices]
  Let $A\in \real^{n\times n}$, $B \in \real^{n\times m}$ and
  $C\in\real^{m\times n}$, $n,m \in \natural$, be the state, input and output
  matrices of a linear time-invariant system, where $A$ is symmetric. Then, the
  orthogonal complement to the reachable subspace $X_r$, associated to the pair
  $(A,B)$, is spanned by vectors $v_l$ satisfying
  \begin{equation}
    \begin{split}
      B^T v_l &= 0_m\\
      L v_l &= \lambda v_l.
    \end{split}
    \label{eq:PBH_eigvec_reach}
  \end{equation}
  Respectively, the unobservable subspace $X_{no}$ associated to the pair $(L,
  C)$ is spanned by vectors $v_l$ satisfying
  \begin{equation}
    \begin{split}
      C v_l &= 0_m\\
      L v_l &= \lambda v_l,
    \end{split}
    \label{eq:PBH_eigvec_obs}
  \end{equation}
  for $\lambda\in\real$. \oprocend
  \label{cor:PBH_eigvec}
\end{corollary}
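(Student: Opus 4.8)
The plan is to read this corollary as an eigenvector refinement of the PBH lemma that is made possible purely by the symmetry of the state matrix, so I would prove the reachability statement for $(A,B)$ and obtain the observability statement for $(L,C)$ either verbatim or through the duality $(L,C)=(L^T,B^T)=(L,B^T)$ already recorded in the excerpt. Throughout I take $A$ to be the symmetric state matrix (the $L$ appearing in the eigenvalue equations of \eqref{eq:PBH_eigvec_reach}--\eqref{eq:PBH_eigvec_obs}). First I would compute $X_r^\perp$ algebraically. Since $X_r=\operatorname{Im}\mathcal{R}_n=\operatorname{span}\{A^k[B]_j\}$, a vector $v$ lies in $X_r^\perp$ exactly when $v^T A^k B=0$ for $k=0,\dots,n-1$; by the Cayley--Hamilton theorem this constraint automatically extends to all $k\ge 0$, and using $A^T=A$ it can be rewritten as $B^T A^k v=0_m$ for every $k\ge 0$. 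Taking $k=0$ shows that every $v\in X_r^\perp$ already satisfies $B^T v=0_m$, which is the first equation of \eqref{eq:PBH_eigvec_reach}.

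Next I would show that $X_r^\perp$ is spanned by eigenvectors of $A$. The reachable subspace $X_r$ is the smallest $A$-invariant subspace containing $\operatorname{Im}B$, so it is $A$-invariant; consequently $X_r^\perp$ is invariant under $A^T=A$. Because $A$ is real symmetric, its restriction to the invariant subspace $X_r^\perp$ is again self-adjoint, so the spectral theorem furnishes an orthonormal basis of $X_r^\perp$ consisting of eigenvectors $v_l$ of $A$ with real eigenvalues $\lambda\in\real$. Each such $v_l$ then satisfies both $A v_l=\lambda v_l$ and, lying in $X_r^\perp$, also $B^T v_l=0_m$, i.e.\ the full system \eqref{eq:PBH_eigvec_reach}. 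For the converse I would check that any $v$ solving \eqref{eq:PBH_eigvec_reach} automatically lies in $X_r^\perp$, since $B^T A^k v=\lambda^k B^T v=0_m$ for all $k\ge 0$; this shows the solution set of \eqref{eq:PBH_eigvec_reach} is exactly $X_r^\perp$, completing the reachability claim. The observability statement follows by replacing $B^T$ with $C$ and $X_r^\perp$ with the unobservable subspace $X_{no}=\ker\mathcal{O}_n=\{v:CA^k v=0_m,\ k\ge 0\}$, which is manifestly $A$-invariant, and repeating the identical argument to obtain \eqref{eq:PBH_eigvec_obs}.

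The step I expect to be the genuine obstacle is the passage from ``there exists one eigenvector witnessing the rank drop'' (all the plain PBH lemma delivers) to ``the \emph{entire} orthogonal complement is spanned by eigenvectors of $A$.'' The invariance of $X_r^\perp$ together with the spectral theorem applied to the restricted operator is precisely what bridges this gap, and it is exactly here that symmetry is indispensable: without it the invariant subspace need not admit an eigenbasis, and left and right eigenvectors would not coincide, so one could not collapse the PBH left-eigenvector condition into the single pair of equations in \eqref{eq:PBH_eigvec_reach}. I would therefore take care to argue the invariance cleanly and to invoke self-adjointness of the restriction rather than merely citing the PBH lemma, which on its own certifies only loss of rank and not the spanning property asserted by the corollary.
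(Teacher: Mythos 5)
The paper does not actually prove this corollary: it is stated without proof, accompanied only by the one-line remark that it follows by ``combining the PBH lemma with the fact that the state matrix is symmetric (and therefore diagonalizable)''. Your argument is correct and complete, and it supplies exactly the content the paper omits. Note, however, that your route is not quite the one the paper gestures at: you never invoke the PBH rank test itself, but instead work directly with $X_r=\operatorname{Im}\mathcal{R}_n$, observe that $X_r$ is $A$-invariant so that $X_r^{\perp}$ is invariant under $A^T=A$, and apply the spectral theorem to the self-adjoint restriction of $A$ to $X_r^{\perp}$ to produce an orthonormal eigenbasis of $X_r^{\perp}$; the reverse inclusion via $B^TA^kv=\lambda^kB^Tv=0_m$ then closes the equality, and the observability half is the exact dual. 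The proof the paper presumably has in mind instead decomposes an arbitrary $v\in X_r^{\perp}$ along the spectral projections $P_\lambda$ of the diagonalizable matrix $A$ and deduces $B^TP_\lambda v=0_m$ for every eigenvalue $\lambda$ from $\sum_\lambda \lambda^k B^TP_\lambda v=0_m$, $k=0,\dots,n-1$, by a Vandermonde argument. The two arguments are equivalent in substance; yours has the merit of isolating precisely where symmetry is indispensable (self-adjointness of the restriction to the invariant subspace), which you correctly identify as the gap between the bare PBH lemma (existence of one witnessing eigenvector) and the spanning claim of the corollary, while the projection route more literally matches the paper's ``diagonalizability'' hint and avoids discussing restrictions of operators.
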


In the rest of the paper we will denote the eigenvalues and eigenvectors for
which \eqref{eq:PBH_eigvec_reach} is satisfied, \emph{unreachable eigenvalues
  and eigenvectors}. Respectively, we will denote \emph{unobservable eigenvalues
  and eigenvectors} the ones for which \eqref{eq:PBH_eigvec_obs} is satisfied.

\section{Spectral properties of the Laplacian of a path and related submatrices}
\label{sec:cycle-path_spectrum}
In this section we provide a closed form expression for the eigenvalues and
eigenvectors of suitable submatrices of the Laplacian of path and cycle
graphs. This characterization will play a key role in the characterization of
the reachability (observability) properties of path and cycle graphs. As a
self-contained result of this section, we provide a closed form expression for
the Laplacian spectrum of a path graph.

We start motivating the analysis in this section.
Recall that, without loss of generality, we assume that nodes of the path are
labeled so that the undirected edges are ($i$, $i+1$) for $i\in\until{n-1}$.
Let $L_n$ denote the Laplacian of a path graph of length $n$ and $B$ ($C$) the
input (output) matrix associated to the set of control (observation) nodes $I_o
= \{i_1,\ldots,i_m\}$. Using the PBH Lemma in the version of
Corollary~\ref{cor:PBH_eigvec}, unreachability (unobservability) for the path
graph from $I_o$ is equivalent to the existence of a nonzero solution of the
linear (algebraic) system $L_n v = \lambda v$, where $v$ satisfies $B^T\, v = 0$
($C\, v = 0$), i.e. $(v)_j = 0$ for each $j\in I_o$. Exploiting the structure of
the linear system we can write

\[ \!\!\!\!\!\!\!\!\!\!\!\!\!\!\!\!\!\!\!\!  \!\!\!\!\!\!\!\!\!\!\!\!\!\!
\!\!\!\!\!\!\ \!\!\!\!\!\!\!\!\!\!\!  \!\!\!\!\!\!\!\!\!\!\!\!\!\!\!\!\!\!\!\!
\!\!\!\!\!\!  \!\!\!\!\!\!\!\!\!\!\!\!\!\!\!\!
\begin{bmatrix}
  1      &      -1 &\dots        &     0    &   \dots & 0 \\
  -1      &       2 &     \       &     \vdots    &  \   &     0\\
  \vdots &         & \ddots      &      -1      & \   &    0\\
  0      &         &    -1       &       2  &      \ &    0\\
  0      &         &    0       &      -1  &    \ &     -1\\
  0      &         &            &   \vdots    &   \dots &       1\\
\end{bmatrix}\!\!\!
\begin{bmatrix}
  (v)_1  \\
  \vdots\\
  (v)_{i_1-1}\\
  0\\
  (v)_{i_1+1}\\
  \vdots\\
  (v)_n  \\
\end{bmatrix}
\!\!\!=\! \lambda\!\!
\begin{bmatrix}
  (v)_1  \\
  \vdots\\
  (v)_{i_1-1}\\
  0\\
  (v)_{i_1+1}\\
  \vdots\\
  (v)_n  \\
\end{bmatrix}\!\!\!\!  \!\!\!\!\!\! \!\!\!\!\!\!\!\!\!\!\!\!\!
\!\!\!\!\!\!\!\!\!\!\!\!\!\!\!\!\!  \!\!\!\!\!\!\!\!\!\!\!\!\!\!\!\!\!\!\!\!
\!\!\!\!\!\!\!\!\!\!\!\!\!\!\!\!  \!\!\!\!\!\!\!\!\!\!\!\vrule
\]

\small
$$\!\!\!\!\!\!\!\!\!\!\!\!\!\!\!\!
\uparrow i_1\textrm{-th column }$$ \normalsize
where the vertical line on the $i_1$-th column means that it is multiplied by
$(v)_{i_1}=0$. The same holds for each $j$-th column, $j\in I_o$.
Now, define the matrices $\Lu_\nu\in \real^{\nu\times\nu}$ and $\Lm_\mu\in
\real^{\mu\times\mu}$ as

\[
\Lu_\nu = 
\begin{bmatrix}
  1 & -1 &   & 0 \\
  -1&  2 &   &   \\
  0 &   & \ddots  &  -1\\
  0 &   & -1 & 2 \\
\end{bmatrix} \enspace \text{and} \enspace \Lm_\mu = 
\begin{bmatrix}
  2 & -1 &   & 0 \\
  -1&  2 &   &   \\
  0 &   & \ddots  &  -1\\
  0 &   & -1 & 2 \\
\end{bmatrix},
\]
where the subindex refers to their dimensions. The Laplacian $L_n$ can be
compactly written as

\[
\hspace{-3cm}
\begin{split}
  L_n &=\begin{bmatrix} ~ & \phantom{-}\vdots &\dots & \phantom{-}0 & \dots &
    0 \\
    \Lu_{i_1-1} & -1 &\dots & \phantom{-}0 & \dots &
    0 \\
    \ldots & \phantom{-}2 & \ & \phantom{-}\vdots & \ &
    0\\
    \vspace{-0.5cm} & -1 & ~ & ~ & \
    &    ~\\
    \vspace{-0.4cm}~ & ~ & \Lm_{i_2-i_1-1} & ~ & \
    &    ~\\
    \vdots & ~ & ~ & -1 & \
    &    ~\\
    0 & \phantom{-}0 & & \phantom{-}2 & \ &
    0\\
    0 & \phantom{-}\vdots& 0 & -1 & \ddots\
    &      \\
    0 & & & \phantom{-}\vdots & &
    \Pi\Lu_{n-i_m}\Pi \\
  \end{bmatrix}, \hspace{-6.39cm}\vrule
  \hspace{2.75cm}\vrule\\
  &\tiny \hspace{1.2cm}\textrm{$i_1$-th column} \uparrow
  \hspace{0.4cm}\textrm{$(i_2-i_1)$-th column} \uparrow \dots \normalsize
\end{split}
\]
where $\Pi = \Pi^T = \Pi^{-1}$ is the (symmetric) permutation matrix reversing
all the components of a vector.

\begin{remark}[Partition of the Laplacian of a cycle]
  Applying the same procedure to the Laplacian of a cycle, under the agreement
  of labeling the nodes so that $i_1 = 1$, we get a partition of the Laplacian
  where the submatrices are all matrices of type $\Lm_\mu$, $\mu\in
  \natural$.\oprocend
\end{remark}

We are now ready to investigate the spectral properties of these matrices. We
begin with a useful lemma.

\begin{lemma}
  \label{lem:nonzero_comp}
  The eigenvectors of $\Lu_\mu$, $\Lm_\mu$ and $L_\mu$, $\mu\in \natural$, have
  nonzero first and last components. 
\end{lemma}

\begin{proof}
  We prove the statement by proving that if an eigenvector has zero first
  component, then all the other components have to be zero. The same line of
  proof can be followed to prove that if the last component is zero, then the
  same must hold for the previous $\mu-1$.
  
  Let $\lambda$ be an eigenvalue of $\Lu_\mu$ (respectively $\Lm_\mu$ and
  $L_\mu$) and $v = [v_1 \ldots v_\mu]$ the corresponding eigenvector. The
  following conditions must hold
  \[
  \begin{split}
    a v_1 - v_2 &= \lambda v_1\\
    -v_{i-1} + b v_{i} -v_{i+1} &= \lambda v_i, \qquad i\in\fromto{2}{\mu-1}\\
    -v_{\mu-1} + b v_{\mu} &= \lambda v_\mu,
  \end{split}
  \]
  for suitable (positive) $a$ and $b$.  We proceed by induction. We use the
  inductive assumption that $v_{j-1}=v_{j}=0$ and prove that $v_{j+1}=0$. The
  statement is obviously true for $j=1$. Indeed, from the first equation it
  follows easily that if $v_1=0$ then $v_2=0$ (we have considered a fake $v_0=0$
  in the first equation). Then, plugging the inductive assumption in the second
  equation, we get $v_{j+1}=0$ for $j\in\fromto{2}{\mu-1}$, while the last
  equation gives the result for $j=\mu$.
\end{proof}

\begin{remark}[A path is reachable (observable) from an external node]
  Combining the previous lemma with Corollary~\ref{cor:PBH_eigvec}, it follows
  easily that a path graph is reachable (observable) from each of the external
  nodes as shown, e.g., in \cite{AR-MJ-MM-ME:09} (\cite{GP-GN:10a}). \oprocend
  \label{rmk:reach_observ_external_nodes}
\end{remark}

\begin{proposition}[Eigenstructure of $\Lu_\nu$ and $\Lm_\mu$]
  \label{prop:eigs_Lu_Lm}
  For any two matrices $\Lu_\nu\in \real^{\nu\times\nu}$ and $\Lm_\mu\in
  \real^{\mu\times\mu}$ the following holds:
  \begin{enumerate}
  \item All the eigenvalues of $\Lu_\nu$ are eigenvalues of $\Lm_{2\nu}$ and the
    corresponding eigenvectors, respectively $v\in\real^\nu$ and
    $w\in\real^{2\nu}$, are related by $w=
    \begin{bmatrix}
      \Pi v\\
      v
    \end{bmatrix}$;
  \item Eigenvalues and eigenvectors of $\Lu_\nu$ and $\Lm_{\mu}$ have the
    following closed form expression:
    \begin{equation}
      \begin{cases}
        \label{eq:eigN}
        \lambda_{\Lu_\nu} = 2-2\cos\left[(2k-1) \frac{\pi}{2\nu+1}\right],  \\[1ex]
        (v_k)_j =\sin\left[\dfrac{(\nu+j) (2k-1) \pi}{2 \nu + 1}\right], &j=1,\ldots,\nu ,\\
        & k=1,\ldots,\nu
      \end{cases}
    \end{equation}
    \begin{equation}
      \!\!\!\!\!\!\!\!\!\!\!\!\!\!\!\!\!\!
      \begin{cases}
        \label{eq:eigM}
        \lambda_{\Lm_\mu} = 2-2\cos\left(k\frac{\pi}{\mu+1}\right),\\
        (w_k)_j =\sin\left(\dfrac{j k \pi}{\mu + 1}\right), &j=1,\ldots,\mu,\\
        &k=1,\ldots,\mu.
      \end{cases}
    \end{equation}
  \end{enumerate} 
\end{proposition}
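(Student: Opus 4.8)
The plan is to handle $\Lm_\mu$ first, then prove the embedding of part~(i), and finally read the spectrum of $\Lu_\nu$ off that of $\Lm_{2\nu}$. First I would establish \eqref{eq:eigM}. Writing $\Lm_\mu w = \lambda w$ row by row, every interior row is the three-term recurrence $-w_{j-1} + (2-\lambda)w_j - w_{j+1} = 0$, while the first and last rows are this same recurrence under the ghost conventions $w_0 = 0$ and $w_{\mu+1} = 0$. Substituting the ansatz $w_j = \sin(j\theta)$ and using $\sin((j{+}1)\theta) + \sin((j{-}1)\theta) = 2\cos\theta\,\sin(j\theta)$ shows the recurrence holds precisely when $\lambda = 2 - 2\cos\theta$; the condition $w_0 = 0$ is automatic, and $w_{\mu+1} = \sin((\mu{+}1)\theta) = 0$ quantizes $\theta = k\pi/(\mu+1)$, $k=1,\dots,\mu$. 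Since $\cos$ is strictly monotone on $(0,\pi)$, these $\mu$ eigenvalues are distinct, so the corresponding eigenvectors form a basis and \eqref{eq:eigM} lists all eigenpairs.

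Next I would prove part~(i) by direct substitution. Given $\Lu_\nu v = \lambda v$, set $w = \left[\begin{smallmatrix}\Pi v\\ v\end{smallmatrix}\right]\in\real^{2\nu}$ and verify $\Lm_{2\nu} w = \lambda w$ one row at a time, using $(\Pi v)_j = v_{\nu+1-j}$. The rows far from the centre reproduce, after reflection, the interior equations of $\Lu_\nu$, and the two extreme rows $1$ and $2\nu$ reproduce its last equation $-v_{\nu-1} + 2v_\nu = \lambda v_\nu$. The delicate step is the pair of central rows $\nu$ and $\nu+1$: each collapses to $v_1 - v_2 = \lambda v_1$, which is exactly the first equation of $\Lu_\nu$, the one carrying the distinctive top-left entry $1$. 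This is where the asymmetry of $\Lu_\nu$ is absorbed by the reflection in $w$, and where the index bookkeeping must be done carefully.

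Finally I would obtain \eqref{eq:eigN} from part~(i). Every eigenvector of $\Lu_\nu$ produces a \emph{palindromic} eigenvector $w$ of $\Lm_{2\nu}$, i.e. one with $w_j = w_{2\nu+1-j}$. Evaluating \eqref{eq:eigM} at $\mu = 2\nu$ gives $(w_k)_{2\nu+1-j} = (-1)^{k+1}(w_k)_j$, so $w_k$ is palindromic exactly for odd $k = 2\ell-1$, $\ell = 1,\dots,\nu$; these are therefore the only possible eigenvalues of $\Lu_\nu$. Because $v \mapsto \left[\begin{smallmatrix}\Pi v\\ v\end{smallmatrix}\right]$ is linear and injective while the eigenvalues of $\Lm_{2\nu}$ are simple, $\Lu_\nu$ can have no repeated eigenvalue, so its $\nu$ eigenvalues inject into the $\nu$ odd-indexed eigenvalues of $\Lm_{2\nu}$ and, by counting, coincide with them. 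Taking the lower half of $w_{2\ell-1}$ then gives $(v_\ell)_j = \sin\big((\nu+j)(2\ell-1)\pi/(2\nu+1)\big)$ with $\lambda = 2 - 2\cos\big((2\ell-1)\pi/(2\nu+1)\big)$, which is \eqref{eq:eigN}.

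The main obstacles I expect are twofold. The first is the central-row verification in part~(i), where the non-uniform first row of $\Lu_\nu$ must be matched against the reflection; the surrounding trigonometric manipulations are routine, but the indices are easy to misalign. The second is the completeness argument: part~(i) on its own supplies only one inclusion, so one must separately rule out both missing and repeated eigenvalues of $\Lu_\nu$, which is precisely what the simplicity of the $\Lm_{2\nu}$ spectrum together with the injectivity of the embedding delivers.
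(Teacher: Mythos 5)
Your proposal is correct and follows essentially the same route as the paper: the spectrum of $\Lm_\mu$ via the three-term recurrence with sine ansatz (the paper gets this by writing $\Lm_\mu = 2I - A_p$ and citing the standard path-adjacency computation), the embedding $w = \bigl[\begin{smallmatrix}\Pi v\\ v\end{smallmatrix}\bigr]$ verified by direct substitution (the paper uses an equivalent block decomposition of $\Lm_{2\nu}$), and the identification of the $\Lu_\nu$ spectrum with the odd-$k$ palindromic eigenvectors of $\Lm_{2\nu}$. Your final counting step, using injectivity of the embedding together with simplicity of the $\Lm_{2\nu}$ eigenvalues to rule out missing or repeated eigenvalues of $\Lu_\nu$, is spelled out more explicitly than in the paper, which leaves that completeness argument largely implicit.
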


\begin{proof}
  To prove the first statement, consider a vector $w=
  \begin{bmatrix}
    w_1 \\
    w_2
  \end{bmatrix} =
  \begin{bmatrix}
    \Pi v\\
    v
  \end{bmatrix}$, with $v\in\real^\nu$.  We show that $w$ is an eigenvector of
  $\Lm_{2\nu}$ with eigenvalue $\lambda_N$ if and only if $v$ is an eigenvector
  of $\Lu_\nu$ with eigenvalue $\lambda_N$. Indeed, by exploiting the structure
  of $\Lm_{2\nu}$ we get
  \[
  \begin{split}
    \Lm_{2\nu}
    \begin{bmatrix}
      w_1 \\[1.2ex]
      w_2
    \end{bmatrix}
    &=
    \begin{bmatrix}
      \Pi \Lu_\nu \Pi+{e}_\nu{e}_\nu^T &  & -{e}_\nu{e}_1^T \\[1.2ex]
      -{e}_1{e}_\nu^T &  & \Lu_\nu+{e}_1{e}_1^T \\
    \end{bmatrix}
    \begin{bmatrix}
      \Pi v\\
      v
    \end{bmatrix}\\[1.2ex]
    &=
    \begin{bmatrix}
      \Pi \Lu_\nu \Pi \, \Pi v +{e}_\nu(\underbrace{{e}_\nu^T\Pi}_{{e}_1^T} \, v -  {e}_1^T \, v) \\[1.3ex]
      - {e}_1 (\underbrace{{e}_\nu^T\Pi}_{{e}_1^T} \, v)+ \Lu_\nu \, v + {e}_1
      (v)_1
    \end{bmatrix}\\[1.2ex]
    &=
    \begin{bmatrix}
      \Pi\Lu_\nu v +{e}_\nu [(v)_1 -  (v)_1 ]\\[1.3ex]
      \Lu_\nu v +{e}_1 [(v)_1 - (v)_1]
    \end{bmatrix}
    =
    \begin{bmatrix}
      \Pi\Lu_\nu v \\[1.3ex]
      \Lu_\nu v
    \end{bmatrix}
  \end{split}
  \]
  Now, notice that, if $\lambda_N$ is an eigenvalue of $\Lu_\nu$, i.e. $\Lu_\nu
  v=\lambda_N v$ for some nonzero $v\in\mathbb{R}^\nu$, then $w$ is an
  eigenvector of $\Lm_{2\nu}$ with the same eigenvalue. Viceversa, if $w =
  \begin{bmatrix}
    \Pi v\\
    v
  \end{bmatrix} \neq 0$ satisfies $\Lm_{2\nu} w=\lambda_N w$, then, from the
  last $\nu$ relations, it follows easily that $\Lu_\nu v=\lambda_N v$, thus
  concluding the first part of the proof.
	
  To prove the second statement, we observe that $M_\mu = 2 I - A_p$, where
  $A_p$ is the adjacency matrix of the path, reported in
  Appendix~\ref{subsec:adjac_path}. Thus, the eigenvalues of $M_\mu$ can be
  computed by summing $2$ to the eigenvalues of $A_p$ and the eigenvectors are
  the same, so that $\eqref{eq:eigM}$ follows.  We now show that the $\nu$
  eigenvalues and eigenvectors of $\Lu_\nu$ are the eigenvalues and eigenvectors
  of $\Lm_{2\nu}$ corresponding to the odd values of $k$ in \eqref{eq:eigM}. To
  see this, notice that the $\nu$ eigenvectors of $M_{2\nu}$ with structure $w=
  \begin{bmatrix}
    \Pi v \\
    v
  \end{bmatrix}$ satisfy $(w)_j=(w)_{2\nu-j+1}$, $j=1,\ldots,\nu$. Referring to
  the parametrization of $w$ in \eqref{eq:eigM}, we look for $k$s satisfying
  $\sin\left(\frac{j k \pi}{2 \nu + 1}\right)= \sin\left(\frac{(2\nu-j+1) k
      \pi}{2 \nu + 1}\right)$, $\forall j\in\until{\nu}$. This is easily
  verified for odd $k$s because the arguments of the sine functions on the two
  sides sum to $(2l+1)\pi$ for some integer $l$. This concludes the proof.
\end{proof}

Next, the eigenvalues of the Laplacian $L_n$ of a path graph of length $n$ are
expressed in closed form by relating them to the eigenvalues of the $\Lm_{n-1}$
matrix.
The following technical lemma gives the tools to directly compute the
eigenvalues of the laplacian matrix of a path graph.

\begin{lemma}
  The characteristic polynomials of $\Lu_\mu$ and $\Lm_\mu$, $\mu\in \natural$,
  satisfy the following relations,
 \[
  \begin{split}
    \det(sI-\Lu_\mu)&=(s-1)\det(sI-\Lm_{\mu-1})-\det(sI-\Lm_{\mu-2})\\
    \det(sI-\Lu_\mu)&=(s-2)\det(sI-\Lu_{\mu-1})-\det(sI-\Lu_{\mu-2})\\
    \det(sI-\Lm_\mu)&=(s-2)\det(sI-\Lm_{\mu-1})-\det(sI-\Lm_{\mu-2}).
  \end{split}
  \]
\end{lemma}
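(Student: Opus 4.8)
The plan is to read all three identities as instances of the standard three-term recurrence satisfied by determinants of tridiagonal matrices, each obtained from a single cofactor (Laplace) expansion. To fix notation I would set $P_\mu(s) = \det(sI-\Lm_\mu)$ and $Q_\mu(s) = \det(sI-\Lu_\mu)$, and note that both $sI-\Lm_\mu$ and $sI-\Lu_\mu$ are tridiagonal: they have unit super- and subdiagonals (since $-(-1)=1$) and all diagonal entries equal to $s-2$, except that $sI-\Lu_\mu$ carries $s-1$ in its top-left corner. The product of any superdiagonal entry with the symmetric subdiagonal entry is therefore $1$, which is exactly the constant that will multiply the $\mu-2$ term in every recurrence.

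For the last two identities I would expand along the last row (equivalently the last column). Deleting the last row and column of $sI-\Lm_\mu$ leaves exactly $sI-\Lm_{\mu-1}$, and deleting them from $sI-\Lu_\mu$ leaves exactly $sI-\Lu_{\mu-1}$, since the distinguished top-left corner is untouched; in both cases the last diagonal entry is $s-2$ and the off-diagonal product is $1$. Hence the generic tridiagonal expansion $D_\mu = (s-2)D_{\mu-1}-D_{\mu-2}$ yields at once
\[
Q_\mu = (s-2)Q_{\mu-1} - Q_{\mu-2}, \qquad P_\mu = (s-2)P_{\mu-1} - P_{\mu-2},
\]
which are the second and third identities.

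For the first identity I would instead expand $\det(sI-\Lu_\mu)$ along its \emph{first} row $[\,s-1,\ 1,\ 0,\ \dots,\ 0\,]$, precisely because the atypical entry $s-1$ sits there. The $(1,1)$ cofactor is the determinant of the lower-right $(\mu-1)\times(\mu-1)$ block, which is the uniformly-diagonal tridiagonal matrix $sI-\Lm_{\mu-1}$, contributing the term $(s-1)\det(sI-\Lm_{\mu-1})$. The only other nonzero first-row entry is the $(1,2)$ entry equal to $1$; its cofactor carries the sign $(-1)^{1+2}=-1$, and after deleting row $1$ and column $2$ the resulting minor has first column $[\,1,0,\dots,0\,]^T$, so a further expansion along that column reduces it to $\det(sI-\Lm_{\mu-2})$. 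Collecting the two contributions gives
\[
\det(sI-\Lu_\mu) = (s-1)\det(sI-\Lm_{\mu-1}) - \det(sI-\Lm_{\mu-2}),
\]
the first identity.

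I expect no serious obstacle: the argument is pure cofactor bookkeeping, valid for $\mu$ large enough once the usual conventions for the degenerate base cases are adopted. The one place demanding care is the first identity, where one must correctly track the alternating cofactor sign of the $(1,2)$ entry and verify that the two successive minors are genuinely $sI-\Lm_{\mu-1}$ and $sI-\Lm_{\mu-2}$ (and \emph{not} the $\Lu$-type blocks). This hinges on the single observation that the special corner $s-1$ is removed as soon as the first row and column are deleted, after which every remaining diagonal entry equals $s-2$.
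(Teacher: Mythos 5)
Your proof is correct and takes essentially the same route as the paper: the paper likewise obtains the first and second identities by Laplace expansion along the first and last row of $sI-\Lu_\mu$ respectively, and the third by expansion along a boundary row of $sI-\Lm_\mu$ (the paper uses the first row, you the last, which is immaterial by the symmetry of $\Lm_\mu$). Your cofactor bookkeeping, including the sign of the $(1,2)$ term and the identification of the successive minors with $\Lm$-type blocks, is accurate.
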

\normalsize
\begin{proof}
  The result is obtained by applying the Laplace expansion to compute the
  determinant of the matrices $(sI -\Lu_\mu)$ and $(sI-\Lm_\mu)$. In particular,
  the first and second relations are obtained by expanding respectively along
  the first and last row of $(sI -\Lu_\mu)$. The third relation is obtained by
  expanding along, e.g., the first row of $(sI -\Lm_\mu)$.
\end{proof}

\begin{proposition}[Eigenvalues of the Laplacian of a path]
  The characteristic polynomial of the Laplacian, $L_n$, of a path graph of
  length $n$ can be written as
  \[
  \det(sI-L_{n})= s \det(sI-\Lm_{n-1}).
  \]
  Thus, the eigenvalues of the Laplacian are given by
  \begin{equation*}
    \lambda_{L_n} = 2-2\cos\left((k-1) \frac{\pi}{n}\right), k=1,\ldots,n.
    \label{eigL}
  \end{equation*} 
\end{proposition}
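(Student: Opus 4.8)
The plan is to obtain the factorization $\det(sI-L_{n})=s\det(sI-\Lm_{n-1})$ by a short determinant manipulation resting on two ingredients already in hand: multilinearity of the determinant in its rows, and the three recurrences of the preceding lemma. The key observation is that $sI-L_n$ and $sI-\Lu_n$ agree in every entry except the $(n,n)$ one, which equals $s-1$ for $L_n$ and $s-2$ for $\Lu_n$ (recall $\Lu_n$ has last diagonal entry $2$, whereas the Laplacian of a path has last diagonal entry $1$). Hence the last row of $sI-L_n$ differs from that of $sI-\Lu_n$ by the vector $e_n^T$.

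First I would apply multilinearity with respect to the last row. Writing that row of $sI-L_n$ as the sum of the corresponding row of $sI-\Lu_n$ and $e_n^T$, and noting that all other rows coincide, splits the determinant into two summands. In the second summand the last row is $e_n^T$, so a cofactor expansion along it collapses the determinant to the leading $(n-1)\times(n-1)$ principal minor of $sI-\Lu_n$, which is exactly $\det(sI-\Lu_{n-1})$. This gives
\[
\det(sI-L_n)=\det(sI-\Lu_n)+\det(sI-\Lu_{n-1}).
\]
Next I would eliminate the $\Lu$-terms using the first recurrence of the lemma, $\det(sI-\Lu_\mu)=(s-1)\det(sI-\Lm_{\mu-1})-\det(sI-\Lm_{\mu-2})$, at $\mu=n$ and $\mu=n-1$. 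Summing and regrouping yields
\[
\det(sI-L_n)=(s-1)\det(sI-\Lm_{n-1})+\big[(s-2)\det(sI-\Lm_{n-2})-\det(sI-\Lm_{n-3})\big].
\]
The bracketed term is precisely $\det(sI-\Lm_{n-1})$ by the third (the $\Lm$) recurrence, so the right-hand side telescopes to $(s-1)\det(sI-\Lm_{n-1})+\det(sI-\Lm_{n-1})=s\det(sI-\Lm_{n-1})$, which is the claimed factorization.

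Finally, the eigenvalue formula follows by reading off the roots of $s\det(sI-\Lm_{n-1})$. The factor $s$ contributes the eigenvalue $0$, consistent with the connectedness of the path and the eigenvector $\mathbf{1}$, while the remaining roots are the eigenvalues of $\Lm_{n-1}$, namely $2-2\cos(k\pi/n)$ for $k=1,\dots,n-1$ by Proposition~\ref{prop:eigs_Lu_Lm} (taking $\mu=n-1$, so $\mu+1=n$). Writing $0=2-2\cos(0\cdot\pi/n)$ and reindexing with $k\mapsto k-1$, $k=1,\dots,n$, gives $\lambda_{L_n}=2-2\cos\left((k-1)\pi/n\right)$.

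I would expect the only delicate points to be bookkeeping rather than genuine obstacles: getting the sign and index right in the multilinearity/cofactor step, checking the small cases where $\Lm_0$ appears (with the convention $\det(sI-\Lm_0)=1$), and making sure the zero eigenvalue is absorbed exactly once as the $k=1$ term of the cosine formula rather than double-counted. Once the recurrences of the previous lemma and the spectrum of $\Lm_\mu$ are taken as given, the whole argument is a finite algebraic identity.
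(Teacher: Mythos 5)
Your proof is correct, and it rests on the same two pillars as the paper's own argument: the determinant recurrences of the preceding lemma and the closed-form spectrum of $\Lm_\mu$ from Proposition~\ref{prop:eigs_Lu_Lm}. The only genuine difference is the opening move. The paper Laplace-expands $\det(sI-L_n)$ along its last row to obtain $(s-1)\det(sI-\Lu_{n-1})-\det(sI-\Lu_{n-2})$ and then works through a four-line chain of substitutions (which, incidentally, contains a sign slip: the term $-\det(sI-\Lm_{n-4})$ in its second line should be $+\det(sI-\Lm_{n-4})$, though the final identity is unaffected). You instead split the last row by multilinearity, which yields the cleaner intermediate identity $\det(sI-L_n)=\det(sI-\Lu_n)+\det(sI-\Lu_{n-1})$; a single application of the first recurrence at $\mu=n$ and $\mu=n-1$, followed by one application of the $\Lm$ recurrence to the regrouped bracket, then telescopes directly to $s\det(sI-\Lm_{n-1})$. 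Your route is shorter and avoids the $(s-1)^2$ expansion entirely; the only bookkeeping it requires is the boundary convention $\det(sI-\Lm_0)=1$ (and $\det(sI-\Lm_{-1})=0$) for small $n$, which you correctly flag and which the paper's longer chain needs even more of, since it reaches down to $\Lm_{n-4}$. The final read-off of the eigenvalues, absorbing the root $s=0$ as the $k=1$ term of the cosine formula, is identical in both arguments.
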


\begin{proof}
  Applying the Laplace expansion for the computation of the determinant and
  using the results of the previous lemma, the following equalities hold:
  \[
  \begin{split}
    \det(sI-L_{n})
    &=(s-1)\det(sI-\Lu_{n-1})- \det(sI-\Lu_{n-2})\\
    &= (s-1)^2\det(sI-\Lm_{n-2})-2(s-1)\det(sI-\Lm_{n-3}) -\det(sI-\Lm_{n-4})\\
    &=(s^2-2s)\det(sI-\Lm_{n-2})- s\det(sI-\Lm_{n-3})\\     &=(s^2-2s)\det(sI-\Lm_{n-2})-s\left[(s-2)\det(sI-\Lm_{n-2})-\det(sI-\Lm_{n-1})\right]\\
    &= s\det(sI-\Lm_{n-1}).
  \end{split}
  \]
\end{proof}

\section{Reachability and observability of path and cycle graphs}
\label{sec:main_results}
In this section we completely characterize the reachability and observability of
path and cycle graphs.

\subsection{Reachability and observability of path graphs}
We characterize the reachability (observability) of a path graph by using the
PBH lemma in the form expressed in Corollary~\ref{cor:PBH_eigvec}. First, as
recalled in Remark~\ref{rmk:reach_observ_external_nodes}, a path graph is always
(reachable) observable from nodes $1$ or $n$. Next two lemmas give necessary and
sufficient conditions for reachability (observability) from a given subset of
nodes in terms of the $\Lu_\nu$ and $\Lm_\mu$ submatrices introduced in the
previous section.

\begin{lemma}
  A path graph of length $n$ is reachable (observable) from a node
  $i$\,$\in$\,$\fromto{2}{n-1}$ if and only if the matrices $\Lu_{i-1}$ and
  $\Lu_{n-i}$ do not have any common eigenvalue. The eigenvalues common to the
  two matrices are all and only the unreachable (unobservable) eigenvalues of
  the Laplacian $L_n$ from node $i$.
  \label{lem:share_eig_siso}
\end{lemma}

\begin{proof}
  Applying Corollary~\ref{cor:PBH_eigvec}, we have that $i$ is not reachable
  (observable) if and only if $L_{n} v = \lambda v$ and $e_i^T v = 0$ for some
  $\lambda$ and $v$. Equivalently, $i$ is not reachable (observable) if and only
  if there exists an eigenvector $v$ of $L_n$ with $v = [ v_1 ~0 ~v_2 ]^T$,
  $v_1\in\real^{i-1}$, $v_2\in\real^{n-1}$. Component-wise this is written as
  \begin{equation}
    \begin{split}
      \Lu_{i-1} v_1 &= \lambda v_1\\
      (v_1)_{i-1} + (v_2)_{1} &= 0\\
      \big(\Pi\Lu_{n-i}\Pi\big) \, v_2 &= \lambda v_2.
    \end{split}
    \label{eq:PBH_path_single}
  \end{equation}
  The necessary condition follows easily by the above equations. Indeed, if
  $\Lu_{i-1}$ and $\Lu_{n-i}$ have at least one common eigenvalue $\lambda_0$
  with corresponding eigenvectors respectively $v_{1 0}$ and $v_{2 0}$, then the
  conditions in the above equations are satisfied for $v = [v_{1 0} ~0 ~\rho
  v_{2 0}]$ and $\lambda = \lambda_0$, where $\rho \in \real$ is just a scaling
  factor to satisfy $(v_{1 0})_{i-1} + \rho(v_{2 0})_{1} = 0$.

  To prove the converse we proceed by analyzing when these three conditions are
  satisfied. The first equation in \eqref{eq:PBH_path_single} is verified in two
  cases: i) $v_1=0$ and $\lambda$ arbitrary, and ii) $v_1$ and $\lambda$
  respectively eigenvector and eigenvalue of $\Lu_{i-1}$. From the first
  condition it follows easily that $(v_2)_{1}=0$ and, using
  Lemma~\ref{lem:nonzero_comp}, that $v_2 = 0$. Therefore, the only possibility
  to have unreachability (unobservability) is ii). Now, using the second
  equation it follows easily that $v_2 \neq 0$, and, using the third equation,
  that $\Pi v_2$ must be an eigenvector of $\Lu_{n-i}$ corresponding to the same
  eigenvalue $\lambda$ of $\Lu_{i-1}$. This concludes the first part of the
  proof.

  The fact that the unreachable (unobservable) eigenvalues of $L_n$ from node
  $i$ are all and only the eigenvalues common to $\Lu_{i-1}$ and $\Lu_{n-i}$
  follows straight from the previous argument. Indeed, by definition the
  unreachable (unobservable) eigenvalues are all and only the ones satisfying
  the condition in Corollary~\ref{cor:PBH_eigvec} and, thus, the equations in
  \eqref{eq:PBH_path_single}.
\end{proof}

\begin{remark}[Paths with odd number of nodes]
  A straightforward consequence of the previous lemma is that a path graph with
  an odd number, $n$, of nodes is not reachable (observable) from the central
  node. Also, $(n-1)/2$ eigenvalues are unreachable (unobservable) from that
  node, namely the eigenvalues of $L_n$ that are also eigenvalues of
  $\Lu_{(n-1)/2}$. The corresponding $(n-1)/2$ unreachable (unobservable)
  eigenvectors of $L_n$ are of the form $[v^T \; 0 \; \text{-}v^T \Pi]^T$ with
  $v$ being the eigenvectors of $\Lu_{(n-1)/2}$ as in \eqref{eq:eigN}. \oprocend
\end{remark}

A generalization to the multi input (output) case is given in the following
lemma.

\begin{lemma}
  \label{lem:share_eig_mimo}
  A path graph of length $n$ is reachable (observable) from the set of control
  (observation) nodes $I_o = \{i_1, \ldots, i_m\}$ if and only if the matrices
  $\Lu_{i_1-1}$, $\Lm_{i_2 - i_1 - 1}$, $\ldots$, $\Lm_{i_m - i_{m-1} - 1}$ and
  $\Lu_{n-i_m}$ do not have common eigenvalues. The eigenvalues common to the
  matrices are all and only the (unreachable) unobservable eigenvalues of $L_n$
  from the set $I_o$. 
\end{lemma}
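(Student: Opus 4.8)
The plan is to reduce unreachability (unobservability) from $I_o$ to a spectral condition via the PBH characterization in Corollary~\ref{cor:PBH_eigvec}, exactly as in the single-node Lemma~\ref{lem:share_eig_siso}, and then to push the same block-decomposition argument through all $m$ zeroed nodes. By Corollary~\ref{cor:PBH_eigvec}, the path is unreachable from $I_o$ if and only if there is a nonzero $v$ with $L_n v = \lambda v$ and $(v)_j = 0$ for every $j\in I_o$. First I would write out the row equations of $L_n v = \lambda v$ and observe that, once the components indexed by $I_o$ are set to zero, the remaining rows split into independent blocks: the rows $1,\dots,i_1-1$ give $\Lu_{i_1-1}v_0 = \lambda v_0$; the rows $i_k+1,\dots,i_{k+1}-1$ give $\Lm_{i_{k+1}-i_k-1}v_k = \lambda v_k$; and the rows $i_m+1,\dots,n$ give $(\Pi\Lu_{n-i_m}\Pi)v_m = \lambda v_m$. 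The rows indexed by $I_o$ themselves, being multiplied by the zero components, reduce to the coupling conditions $(v)_{i_k-1} + (v)_{i_k+1} = 0$ for each $k$, i.e. the last component of block $v_{k-1}$ and the first component of block $v_k$ are opposite.

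The crux, and the step I expect to be the main obstacle, is an ``all-or-nothing'' dichotomy for the blocks, for which Lemma~\ref{lem:nonzero_comp} is essential. Each block equation forces $v_k$ to be either the zero vector or a genuine eigenvector of its block matrix for the common value $\lambda$; in the latter case Lemma~\ref{lem:nonzero_comp} guarantees that its first and last components are nonzero (note that $\Pi\Lu_{n-i_m}\Pi$ is orthogonally similar to $\Lu_{n-i_m}$, so it shares its spectrum and inherits the nonzero-endpoint property after reversal by $\Pi$). I would then argue by propagation: if some $v_{k-1}$ is nonzero, its last component is nonzero, so by the coupling condition the first component of $v_k$ is nonzero, forcing $v_k$ to be a nonzero eigenvector with the same $\lambda$; the symmetric argument propagates to the left. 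Since $v\neq 0$ means at least one block is nonzero, all blocks are simultaneously nonzero eigenvectors sharing the eigenvalue $\lambda$. Hence unreachability forces $\lambda$ to be a common eigenvalue of $\Lu_{i_1-1}, \Lm_{i_2-i_1-1}, \dots, \Lm_{i_m-i_{m-1}-1}, \Lu_{n-i_m}$. I would also check the degenerate case of adjacent control nodes ($i_{k+1}=i_k+1$): the corresponding block is empty (for instance $\Lm_0$ has no eigenvalues), the coupling conditions force the neighboring blocks to have a vanishing endpoint and hence to be zero, and the intersection of all spectra is automatically empty, consistent with reachability.

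For the converse I would construct the eigenvector explicitly from a shared eigenvalue. Suppose $\lambda$ is common to all the listed matrices; pick any eigenvector $u_k$ of each block for $\lambda$ (after applying $\Pi$ to the last one). Set $v_0=u_0$ and choose scalars $\rho_1,\dots,\rho_m$ recursively so that the coupling condition at node $i_k$, namely $(v_{k-1})_{\text{last}} + \rho_k (u_k)_{\text{first}} = 0$, is satisfied; this is always solvable with $\rho_k\neq 0$ precisely because $(u_k)_{\text{first}}\neq 0$ by Lemma~\ref{lem:nonzero_comp}. Reassembling $v_0,\rho_1 u_1,\dots,\rho_m u_m$ with zeros inserted at the positions of $I_o$ yields a nonzero $v$ satisfying $L_n v=\lambda v$ and $(v)_j=0$ on $I_o$, so the path is unreachable from $I_o$. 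This establishes the equivalence, and since the whole argument identifies the admissible values of $\lambda$ as exactly the common eigenvalues of the block matrices, it simultaneously proves, through Corollary~\ref{cor:PBH_eigvec}, that these common eigenvalues are all and only the unreachable (unobservable) eigenvalues of $L_n$ from $I_o$.
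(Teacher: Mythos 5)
Your proof is correct and follows essentially the same route as the paper's: the paper reduces the statement to the block system \eqref{eq:eigvec_mimo} via Corollary~\ref{cor:PBH_eigvec} and then invokes ``the same arguments as in Lemma~\ref{lem:share_eig_siso}'', which are exactly the all-or-nothing propagation through the coupling conditions using Lemma~\ref{lem:nonzero_comp} and the scaled reassembly of block eigenvectors that you spell out. Your version is merely more explicit (e.g.\ the adjacent-nodes degenerate case), not different in substance.
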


\begin{proof}
  We proceed as in the single input (output) case and apply the PBH Lemma in the
  version expressed in Corollary~\ref{cor:PBH_eigvec}.
  The path is not reachable (observable) from the set $I_o= \{i_1, \ldots,
  i_m\}$ if and only if there exists an eigenvector $v$ of $L_n$ with
  $(v)_{i_1}=\ldots=(v)_{i_m}=0$, so that $v = [v_{1}^T ~0 ~v_{2}^T \ldots ~0
  ~v_{m}^T]^T$ for suitable vectors $v_{1}\in\real^{i_1-1}, \ldots,
  v_{m}\in\real^{i_m-1}$. This is equivalent to
  \begin{equation}
    \begin{split}
      N_{i_1-1}v_{1} &= \lambda v_{1}\\
      (v_{1})_{i_1-1} +  (v_{2})_1  &= 0\\
      M_{i_2-i_1-1} v_{2} &= \lambda v_{2}\\
      (v_{2})_{i_2-1} +  (v_{3})_1  &= 0\\
      &\vdots  \\
      (v_{m-1})_{i_m-1} + (v_{m})_{1} &= 0\\
      \big(\Pi\Lu_{n-i_m}\Pi\big) v_{m}&= \lambda v_{m} \,.
    \end{split}
    \label{eq:eigvec_mimo}
  \end{equation}
  The proof follows by using the same arguments in
  Lemma~\ref{lem:share_eig_siso}.
\end{proof}

We are now ready to completely characterize the reachability (observability) of
a path by means of simple algebraic rules from number theory. For the sake of
clarity, we state the theorem for path graphs of length $n$, where $n$ has a
prime factorization with distinct odd prime factors. The general case follows
straight and is discussed in a remark.

\begin{theorem}[Path reachability and observability]
  \label{thm:main_thm_path}
  Given a path graph of length $n$, let $n = 2^{n_0} \prod_{\nu =1}^k p_\nu$ be
  a prime number factorization for some $k\in \natural$ and distinct (odd) prime
  numbers $p_1, \ldots, p_{k}$. The following statements hold:
  \begin{enumerate}
  \item the path is not completely reachable (observable) from a node
    $i\in\fromto{2}{n-1}$ if and only if
    \[
    (n-i)\; \eqmod{p} \;(i-1)
    \]
    for some odd prime $p$ dividing $n$;
  \item the path is not completely reachable (observable) from a set of nodes
    $I_o = \{i_1, \ldots, i_m\} \subset \fromto{2}{n-1}$ if and only if
    \[
    \begin{split}
      2 (i_1-1) + 1 \eqmod{p} (i_2-i_1) \eqmod{p} 
      \ldots \eqmod{p} i_{m}-i_{m-1} & \eqmod{p} 2 (n-i_m) + 1,
    \end{split}
    \]
    for some odd prime $p$ dividing $n$;
  \item \label{pt:set_unobs_p} for each odd prime factor $p\in\{p_1,\ldots,
    p_k\}$ of $n$,
    the path is not reachable (observable) from each set of nodes $I_o^p = \{
    \ell p -
    \frac{p-1}{2}\}_{\ell\in\until{\frac{n}{p}}}$
    with the following unreachable (unobservable) eigenvalues
    \begin{equation}
      \begin{split}
        \lambda_{\nu} = 2-2\cos\left((2 \nu-1) \frac{\pi}{{p}}\right), \qquad
        \nu \in\until{\frac{p-1}{2}};
      \end{split}
      \label{eq:unobs_eigs_thm}
    \end{equation}
    and unreachable (unobservable) eigenvectors
    \begin{equation}
      \begin{split}
        V_{\nu} =
        \begin{bmatrix}
          v_{\nu}^T & 0 &-(\Pi v_{\nu})^T &-v_{\nu}^T & 0
          & \ldots &(-1)^{\frac{n}{p}}(\Pi v_{\nu})^T
        \end{bmatrix}^T,
      \end{split}
      \label{eq:unobs_eigvecs_thm}
    \end{equation}
    where $v_{\nu}\in\real^{(p-1)/2}$ is the eigenvector of $\Lu_{(p-1)/2}$
    corresponding to the eigenvalue $\lambda_{\nu}$ for $\nu
    \in\until{(p-1)/2}$; and
  \item if node $i$ belongs to $I_o^{q_j}=\{ \ell q_j -
    \frac{q_j-1}{2}\}_{\ell\in\until{\frac{n}{q_j}}}$ for $l\leq k$ distinct
    prime factors $q_{1}\neq \ldots \neq q_{l}$ of $n$, then the set of
    unreachable (unobservable) eigenvalues from node $i$ is given by
    \begin{equation*}
      \begin{split}
        \lambda_{\nu} = 2-2\cos\left((2 \nu-1) \frac{\pi}{{q_1\cdot \ldots \cdot
              q_l}}\right), \qquad \nu \in\until{\frac{(q_1\cdot \ldots \cdot
            q_l)-1}{2}}.
      \end{split}
    \end{equation*}
    Also, the orthogonal complement to the reachable subspace, $(X_r)^\perp$,
    (respectively the unobservable subspace, $X_{no}$) is spanned by all the
    corresponding eigenvectors of the form
    \begin{equation*}
      \begin{split}
        V_{\nu} =
        \begin{bmatrix}
          v_{\nu}^T & 0 &-(\Pi v_{\nu})^T &-v_{\nu}^T & 0
          & \ldots &(-1)^{\frac{n}{p}}(\Pi v_{\nu})^T
        \end{bmatrix}^T,
      \end{split}
    \end{equation*}
    where $v_{\nu}\in\real^{((q_1\cdot\ldots\cdot q_l)-1)/2}$ is the eigenvector
    of $\Lu_{((q_1\cdot\ldots\cdot q_l)-1)/2}$ corresponding to the eigenvalue
    $\lambda_{\nu}$ for $\nu \in\until{((q_1\cdot\ldots\cdot q_l)-1)/2}$.
  \end{enumerate} 
\end{theorem}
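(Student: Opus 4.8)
The plan is to build everything on Lemmas~\ref{lem:share_eig_siso} and~\ref{lem:share_eig_mimo}, which reduce unreachability to the existence of a common eigenvalue among the tridiagonal blocks $\Lu_{i_1-1}, \Lm_{i_2-i_1-1}, \ldots, \Lu_{n-i_m}$, together with the closed-form spectra in Proposition~\ref{prop:eigs_Lu_Lm}. The single unifying observation is that every block eigenvalue has the form $2-2\cos\theta$ with $\theta\in(0,\pi)$, and that $2-2\cos$ is strictly increasing, hence injective, on $(0,\pi)$; so two blocks share an eigenvalue if and only if they share such an angle $\theta=r\pi$. For $\Lu_\nu$ the admissible ratios are $r=(2k-1)/(2\nu+1)$ with \emph{odd} numerator, and for $\Lm_\mu$ they are $r=k/(\mu+1)$ with arbitrary numerator. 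Thus the entire problem reduces to: does there exist one rational $r\in(0,1)$ realizable simultaneously in every block?

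For part~(i) I would set $r=(2k_1-1)/(2i-1)=(2k_2-1)/(2(n-i)+1)$ and reduce $r=x/y$ to lowest terms. Integrality forces $y$ to divide both denominators $2i-1$ and $2(n-i)+1$; the odd-numerator constraints are met by taking $x=1$, since both denominators are odd so the quotients stay odd. Hence a common $r$ exists exactly when $g:=\gcd(2i-1,2(n-i)+1)>1$, and the range bounds hold because $g\ge 3$. Since $(2i-1)+(2(n-i)+1)=2n$, every odd prime dividing $g$ divides $2n$, hence $n$; conversely an odd prime $p\mid n$ divides $2i-1$ iff it divides $2(n-i)+1$, and $p\mid(2i-1)$ is precisely $(n-i)\equiv(i-1)\ (\mathrm{mod}\ p)$. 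Reading off the shared angles $r=t/g$ with $t$ odd then gives the common eigenvalues $2-2\cos((2\nu-1)\pi/g)$, i.e.\ exactly the spectrum of $\Lu_{(g-1)/2}$.

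Part~(ii) is the same argument run across the chain: a common $r=x/y$ now forces $y$ to divide every gap $d_0=2(i_1-1)+1,\ d_j=i_{j+1}-i_j,\ d_m=2(n-i_m)+1$, so a common eigenvalue exists iff some odd prime $p$ divides all of them, which via $d_0+2\sum_{j=1}^{m-1}d_j+d_m=2n$ again forces $p\mid n$; this is the arithmetic content of the displayed congruences. For part~(iii) I would substitute $I_o^p=\{\ell p-\tfrac{p-1}{2}\}$ and verify that every gap equals $p$, so the end blocks are $\Lu_{(p-1)/2}$ and the interior blocks are $\Lm_{p-1}=\Lm_{2\cdot (p-1)/2}$; Proposition~\ref{prop:eigs_Lu_Lm}(1) then shows the common spectrum is exactly that of $\Lu_{(p-1)/2}$, giving~\eqref{eq:unobs_eigs_thm}. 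The eigenvectors follow by stitching block eigenvectors: the interior $\Lm$-eigenvector is $[(\Pi v_\nu)^T\ v_\nu^T]^T$, and the coupling identities $(v_j)_{\text{last}}+(v_{j+1})_1=0$ in~\eqref{eq:eigvec_mimo} fix the alternating signs, producing~\eqref{eq:unobs_eigvecs_thm} with terminal factor $(-1)^{n/p}$.

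Part~(iv) combines the single-node computation with the distinctness of the prime factors. Since $i\in I_o^{q_j}$ means exactly $q_j\mid(2i-1)$, the distinct primes $q_1,\ldots,q_l$ all divide $2i-1$, and hence so does their product. The step I expect to be most delicate is proving $g=\gcd(2i-1,2(n-i)+1)$ equals $q_1\cdots q_l$ \emph{exactly}: every odd prime dividing $g$ divides $2n$ (so divides $n$) and divides $2i-1$ (so is one of the $q_j$), while $q_j^2\nmid g$ because $q_j$ exactly divides $n$ under the distinct-factor hypothesis and $2(n-i)+1=2n-(2i-1)$. Once $g=q_1\cdots q_l$ is established, part~(i)'s computation immediately gives the eigenvalues $2-2\cos((2\nu-1)\pi/(q_1\cdots q_l))$ and identifies $(X_r)^\perp$ (resp.\ $X_{no}$) as the span of the corresponding stitched eigenvectors associated with $\Lu_{(q_1\cdots q_l-1)/2}$, completing the theorem.
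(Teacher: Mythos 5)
Your proposal follows the paper's own route essentially step for step: Lemmas~\ref{lem:share_eig_siso} and~\ref{lem:share_eig_mimo} reduce the question to common eigenvalues of the blocks $\Lu_{i_1-1},\Lm_{i_2-i_1-1},\dots,\Lu_{n-i_m}$; the closed forms of Proposition~\ref{prop:eigs_Lu_Lm} together with injectivity of $\theta\mapsto 2-2\cos\theta$ on $(0,\pi)$ turn this into a statement about coinciding rational angles; and the existence of admissible numerators reduces everything to $\mathrm{GCD}(2i-1,2(n-i)+1)>1$ and its multi-node analogue. Your reduced-fraction bookkeeping, the identification of the common spectrum with that of $\Lu_{(g-1)/2}$, and the exact-divisibility argument in part (iv) are, if anything, more explicit than what the paper writes down.

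One caveat, on part (ii). What you actually prove --- correctly, and in agreement with Lemma~\ref{lem:share_eig_mimo} --- is that unreachability from $I_o$ is equivalent to the existence of an odd prime $p\mid n$ \emph{dividing} every gap $2(i_1-1)+1,\ i_2-i_1,\ \dots,\ 2(n-i_m)+1$. You then declare this to be ``the arithmetic content of the displayed congruences,'' but the displayed condition is mutual congruence of the gaps modulo $p$, which is strictly weaker: summing the gaps with the correct multiplicities gives $2mc\equiv 0 \pmod{p}$ for the common residue $c$, so $c=0$ is forced only when $p\nmid m$. For instance, with $n=15$ and $I_o=\{4,5,6\}$ the gaps are $7,1,1,19$, all congruent to $1$ modulo the prime factor $3$ of $n$, yet the path is observable from three consecutive nodes (an eigenvector vanishing on them vanishes everywhere by the three-term recursion), consistent with $\mathrm{GCD}(7,1,1,19)=1$. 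So you should either show that the common residue must be zero (true when $p\nmid m$) or state the criterion as divisibility rather than mere congruence. To be fair, the paper's own proof of (ii) is a one-line appeal to ``the same arguments as in the single node case'' and silently makes the identical identification, so this is a defect you have inherited rather than introduced; your divisibility criterion is the mathematically correct statement.
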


\begin{proof}
  Using Lemma~\ref{lem:share_eig_siso} we have that the path graph is not
  completely reachable (observable) from node $i$ if and only if $\Lu_{i-1}$ and
  $\Lu_{n-i}$ have at least one common eigenvalue. Therefore, using
  Proposition~\ref{prop:eigs_Lu_Lm}, we have that it must hold $2 - 2 \cos
  \frac{(2 j_1 - 1) \pi}{2 (i-1) + 1} = 2 - 2 \cos \frac{(2 j_2 - 1) \pi}{2
    (n-i) + 1}$, for some $j_1\in\until{i-1}$ and $j_2\in\until{(n-i)}$. In the
  admissible range of $j_1$ and $j_2$ the cosine arguments are less than $\pi$
  so that the cosine is one to one. Thus, the equality holds if and only if
  $\frac{(2 j_1 - 1)}{2 (i-1) + 1} = \frac{(2 j_2 - 1)}{2 (n-i) + 1}$. Two
  integers $j_1$ and $j_2$ satisfying this equation exist in the admissible
  range if only if $2 (i-1) + 1$ and $2 (n-i) + 1$ are not coprime, that is, if
  and only if $GCD(2 i-1, 2n-2i+1)$ is greater than one. Now, $GCD(2 i-1,
  2n-2i+1)$ is odd because $2 i-1$ and $2n-2i+1$ are. Therefore, we can write $2
  i-1 = p \alpha_1$ and $2n-2i+1 = p \alpha_2$ with $p$, $\alpha_1$ and
  $\alpha_2$ odd.
  This is equivalent to $n + n-2i+1 = p \alpha_2$ and, since $p$ divides $n$,
  $p$ must divide also $(n-2i+1)$ thus concluding the first part of the proof.

  To prove statement (ii), we have by Lemma~\ref{lem:share_eig_mimo} that the
  path graph is not reachable (observable) from the set $I_o = \{i_1, \ldots,
  i_m\}$ if and only if the matrices $\Lu_{i_1-1}$, $\Lm_{i_2 - i_1 - 1}$,
  $\ldots$, $\Lm_{i_m - i_{m-1} - 1}$ and $\Lu_{n-i_m}$ do not have common
  eigenvalues. The proof follows by using again
  Proposition~\ref{prop:eigs_Lu_Lm} and the same arguments as in the single node
  case.

  To prove statement (iii), we start observing that the set of nodes $I_o^p = \{
  \ell p - \frac{p-1}{2}\}_{\ell\in\until{\frac{n}{p}}}$, is the set of all
  nodes satisfying condition in (i) for a given $p\in\{p_1, \ldots,
  p_k\}$. Using Lemma~\ref{lem:share_eig_mimo}, we have that the unreachable
  (unobservable) eigenvalues from this set of nodes are the common eigenvalues
  to $\Lu_{(p-1)/2}$ and $\Lm_{p-1}$. From Proposition~\ref{prop:eigs_Lu_Lm}, it
  follows easily that the common eigenvalues between $\Lu_{(p-1)/2}$ and
  $\Lm_{p}$ are all the eigenvalues of $\Lu_{(p-1)/2}$ and have the form in
  equation~\eqref{eq:unobs_eigs_thm}. As regards the unreachable (unobservable)
  eigenvectors, using equation~\eqref{eq:eigvec_mimo} for this special set
  $I_o$, it follows straights that the eigenvectors have zero components as in
  equation~\eqref{eq:unobs_eigvecs_thm}. To prove that the nonzero components
  have that special structure in equation~\eqref{eq:unobs_eigvecs_thm}, we
  observe that they must be eigenvectors of respectively $\Lu_{(p-1)/2}$,
  $\Lm_{p-1}$, $\ldots$,$\Lm_{p-1}$ and $\Pi\Lu_{(p-1)/2}\Pi$. From point (i) of
  Proposition~\ref{prop:eigs_Lu_Lm}, the eigenvectors of $\Lu_{(p-1)/2}$,
  $\Lm_{p-1}$, $\ldots$,$\Lm_{p-1}$ and $\Pi\Lu_{(p-1)/2}\Pi$ are respectively
  $\alpha_0 v_\nu$, $\alpha_1[(\Pi v_\nu)^T \;v_\nu^T]^T$, $\ldots$,
  $\alpha_{\frac{n}{p}-1}[(\Pi v_\nu)^T \;v_\nu^T]^T$ and $\alpha_{\frac{n}{p}}
  \Pi v_\nu$, where $v_\nu$ is an eigenvector of $\Lu_{(p-1)/2}$ and
  $\alpha_\mu\in\real$, $\mu\in\{0, \ldots, \frac{n}{p}\}$. Finally, using again
  equation~\eqref{eq:eigvec_mimo}, $\alpha_\mu = -\alpha_{\mu-1}$,
  $\mu\in\until{\frac{n}{p}}$, so that the proof follows by choosing $\alpha_0 =
  1$.

  The proof of statement (iv) follows from the definition of unreachable
  (unobservable) eigenvalues and eigenvectors, and arguments as in the previous
  statements.
\end{proof}

\begin{remark}[General version of Theorem~\ref{thm:main_thm_path}]
  \label{rmk:general_main_thm_path}
  In the general case of a path graph of length $n = 2^{n_0} \prod_{\nu =1}^k
  p_\nu$, where $p_1, \ldots, p_k$ are not all distinct, statement (i) and (ii)
  of Theorem~\ref{thm:main_thm_path} continue to hold in the same form. As
  regards statement (iii), it still holds in the same form, but it can also be
  strengthen with a slight modification. That is, for each multiple factor
  $\bar{p}$ with multiplicity $\bar{k}$, the statement continues to hold if
  $\bar{p}$ is replaced by $\bar{p}^\alpha$ with
  $\alpha\in\until{\bar{k}}$. 
  Statement (iv) holds if for each prime factor $\bar{p}$ with multiplicity
  $\bar{k}$ we check if node $i$ belongs not only to $I_o^{\bar{p}}$, but also
  to each $I_o^{\bar{p}^\alpha}$ with $\alpha\in\until{\bar{k}}$. Consistently
  the unreachable (unobservable) eigenvalues and eigenvectors considered in the
  statement must be constructed by using $\bar{p}^{\bar{\alpha}}$ instead of
  $\bar{p}$, where $\bar{\alpha} = \max_{\alpha} \{\alpha \in \until{\bar{k}} |
  i\in I_o^{\bar{p}^\alpha}\}$. \oprocend
\end{remark}

The following corollary follows straight from Theorem~\ref{thm:main_thm_path}
and characterizes all and only the path graphs that are observable from any
node.

\begin{corollary}[Reachable (observable) paths from any node]
  Given a path graph of length $n=2^k$ for some $k\in\natural$, then the path is
  reachable (observable) from any node.\oprocend
\end{corollary}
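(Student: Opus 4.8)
The plan is to derive this as an immediate consequence of statement (i) of Theorem~\ref{thm:main_thm_path}. Recall that statement (i) says a path of length $n$ fails to be reachable (observable) from a node $i\in\fromto{2}{n-1}$ if and only if there is an odd prime $p$ dividing $n$ with $(n-i)\eqmod{p}(i-1)$. The external nodes $1$ and $n$ are already known to be reachable (observable) by Remark~\ref{rmk:reach_observ_external_nodes}, so the only thing left to check is the internal nodes.

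The key observation is that when $n=2^k$, the integer $n$ has \emph{no} odd prime factor at all: its unique prime factorization is $n=2^k$, i.e. in the notation $n=2^{n_0}\prod_{\nu=1}^{l}p_\nu$ we have $n_0=k$ and $l=0$, so the product over odd primes is empty. Consequently the condition in statement (i) — ``there exists an odd prime $p$ dividing $n$ such that $(n-i)\eqmod{p}(i-1)$'' — can never be met, because the existential quantifier ranges over an empty set. Hence for every internal node $i\in\fromto{2}{n-1}$ the path \emph{is} completely reachable (observable) from $i$.

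First I would state that by Remark~\ref{rmk:reach_observ_external_nodes} the path is reachable (observable) from the external nodes $1$ and $n$. Then I would invoke statement (i) of Theorem~\ref{thm:main_thm_path} for the internal nodes, noting that unreachability (unobservability) from an internal node requires the existence of an odd prime divisor of $n$; since $n=2^k$ has no odd prime divisor, no internal node can be unreachable (unobservable). Combining the two cases yields reachability (observability) from every node.

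There is essentially no obstacle here — the proof is a one-line logical deduction once Theorem~\ref{thm:main_thm_path} is in hand. The only point worth stating carefully is the number-theoretic fact that a power of two admits no odd prime factor, which makes the existential condition of statement (i) vacuously false. (One could equivalently argue directly from Lemma~\ref{lem:share_eig_siso}: for an internal node $i$, the submatrices $\Lu_{i-1}$ and $\Lu_{n-i}$ share an eigenvalue iff $2i-1$ and $2n-2i+1$ are not coprime, and their common factor would be an odd divisor of their sum $2n=2^{k+1}$, forcing that factor to be $1$; but piggybacking on statement (i) is cleaner.)
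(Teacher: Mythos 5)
Your proposal is correct and matches the paper's intent exactly: the paper offers no separate proof, stating only that the corollary ``follows straight from Theorem~\ref{thm:main_thm_path},'' and your argument --- external nodes handled by Remark~\ref{rmk:reach_observ_external_nodes}, internal nodes handled by noting that $n=2^k$ has no odd prime divisor so the condition in statement (i) is vacuously unsatisfiable --- is precisely that deduction spelled out. Your parenthetical alternative via Lemma~\ref{lem:share_eig_siso} and the coprimality of $2i-1$ and $2n-2i+1$ is also sound, but unnecessary.
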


Next, we provide a simple routine giving a graphical interpretation of the
results of the theorem. We describe the routine for paths with simple
factorization leaving the generalization to a specific example.
We proceed by associating a unique symbol to each set of nodes defined at point
\ref{pt:set_unobs_p} of Theorem ~\ref{thm:main_thm_path} for a given $p$.  Thus,
each group of nodes sharing the same unreachable (unobservable) eigenvalues has
the same symbol and nodes of different groups (and so associated to different
unreachable (unobservable) eigenvalues) have different symbols. Formally, let $n
= 2^{n_0} \prod_{\nu =1}^k p_\nu$ for some $n_0\in\natural$ and $p_1, \ldots,
p_k$ prime integers. At the beginning of the procedure we initialize all the
nodes without any symbol. For any $p_\nu$, $\nu\in\until{k}$, we partition the
nodes into $n / p_\nu$ groups of $p_\nu$ nodes and assign the same symbol to all
the nodes in position $i = j p_\nu - \frac{p_\nu-1}{2}$,
$j\in\until{\frac{n}{p_\nu}}$.
A set of nodes from which the path is reachable (observable) is obtained by
selecting any node without symbols, if there are any, or a set of nodes having
no symbols in common.

Three examples for $n=6$ (even), $n=15$ (odd) and $n=9$ (multiple factor) are
shown respectively in Figure~\ref{fig:path6}, in Figure~\ref{fig:path15} and in
Figure~\ref{fig:path9}.
In Figure~\ref{fig:path6} nodes with the triangle symbol are unable to
reconstruct the state of the network by themselves. Indeed, they share the same
unreachable (unobservable) eigenvalue $\lambda=1$. In view of the previous
results, focusing on node $i_1=2$, notice that $\Lu_{i-1}=N_1=[1]$ (whose
eigenvalue is $1$), $\Lu_{n-i}=\Lu_{4}$ and its eigenvalues are: $\{0.12, 1,
2.35, 3.53\}$. The common eigenvalue is of course $\lambda=1$. The
unreachability (unobservability) can be checked more easily using the test
$(n-i)=4\; \eqmod{3} \;1=(i-1)$.

\begin{figure}[h!]
  \centering
  \includegraphics[height=.08\linewidth]{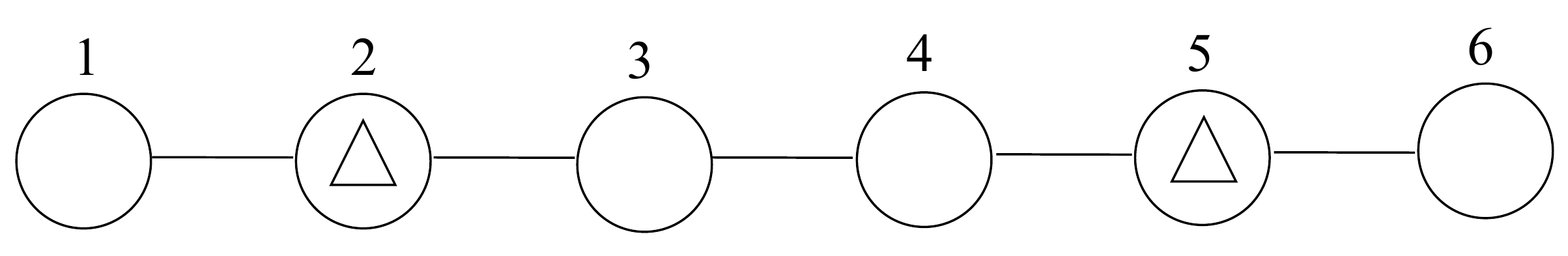}%
  \caption{Reachable (observable) nodes for a path with $n=6$ nodes.}
  \label{fig:path6}
\end{figure}

In Figure~\ref{fig:path15} nodes with the triangle belong to the set
$I_o^{p_1}$, with $p_1=3$, and nodes with the square to $I_o^{p_2}$, with
$p_2=5$. The ``triangle nodes'' share the same (unreachable) unobservable
eigenvalue $\lambda=1$ ($\Lu_{i-1}=N_1=[1]$), while two unreachable
(unobservable) eigenvalues, $0.3820$ and $2.6180$, are associated to the
``square nodes''. Finally, the central node has both the triangle and square
symbols.

\begin{figure}[h!]
  \centering
  \includegraphics[height=.2\linewidth]{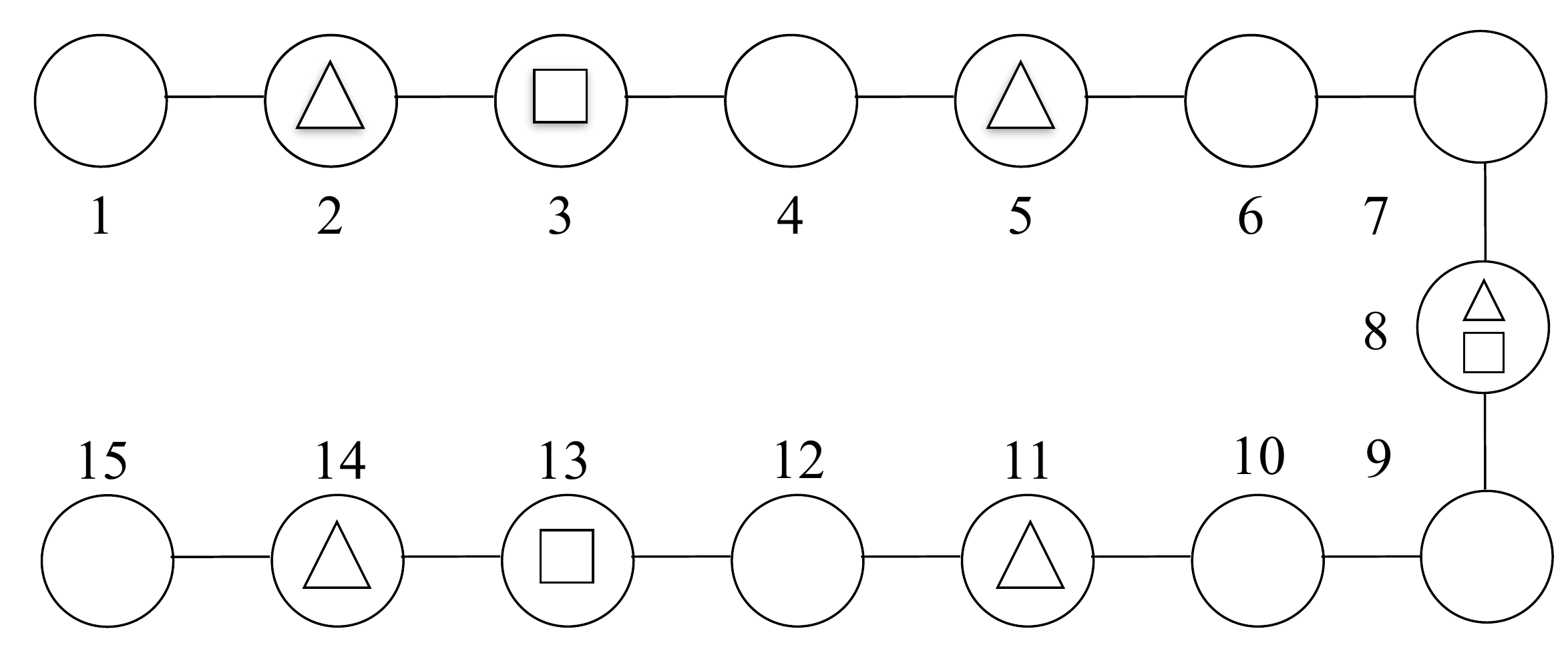}%
  \caption{Reachable (observable) nodes for a path with $n=15$ nodes.}
  \label{fig:path15}
\end{figure}

In Figure~\ref{fig:path9} we consider a path of length $n=9=3^2$, with $p=3$
being a multiple factor. Nodes with the triangle belong to the set $I_o^{p_1}$,
with $p_1=3$. To the central node is associated both a triangle (since it
belongs to $I_o^3$) and a square since it is the unique node in $I_o^{p_1^2}$,
with $p_1^2=9$. The ``triangle nodes'' share the same (unreachable) unobservable
eigenvalue $\lambda=1$ ($\Lu_{i-1}=N_1=[1]$), while the central node has four
unreachable (unobservable) eigenvalues being the eigenvalues of $\Lu_4$. This
example suggests how to associate symbols in the general case when $n$ has
multiple factors. That is, for each multiple factor $\bar{p}$ with multiplicity
$\bar{k}$, we use $\bar{k}$ different symbols, one for each ${\bar{p}}^\alpha$,
$\alpha\in\until{\bar{k}}$.

\begin{figure}[h!]
  \centering
  \includegraphics[height=.11\linewidth]{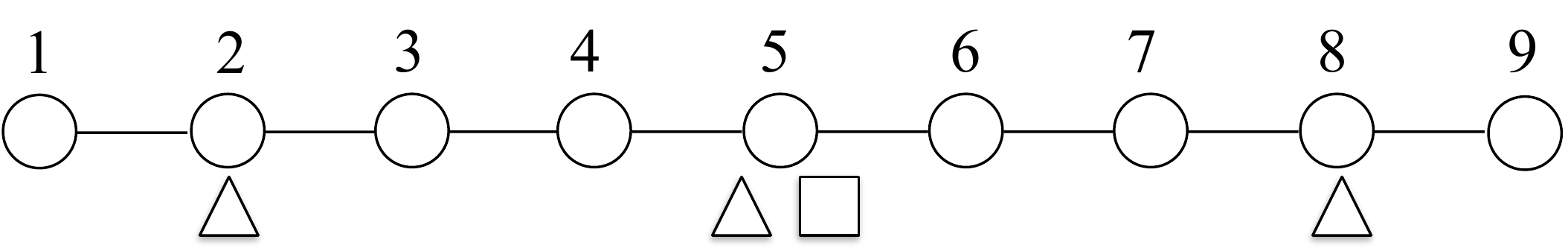}%
  \caption{Reachable (observable) nodes for a path with $n=9$ nodes.}
  \label{fig:path9}
\end{figure}

\subsection{Reachability and observability of cycle graphs}
Next, we characterize the reachability (observability) of a cycle graph. We
start with a negative result, namely that a cycle graph is not reachable
(observable) from a single node. First, we need a well known result in linear
systems theory \cite{PJA-ANM:94}.

\begin{lemma}
  If a state matrix $A\in\real^{n\times n}$, $n\in\natural$, has an eigenvalue
  with geometric multiplicity $\mu\geq 2$, then for any $B\in\real^n$
  (respectively $C\in\real^{1\times n}$) the pair $(A,B)$ is unreachable
  (respectively the pair $(A,C)$ is unobservable).\oprocend
\end{lemma}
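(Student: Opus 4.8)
The plan is to apply the PBH lemma directly, reducing the statement to an elementary rank inequality; no appeal to the symmetry of $A$ is needed, so the argument covers the general matrix in the statement. First I would restate the hypothesis in terms of rank: saying that $A$ has an eigenvalue $\lambda_0$ of geometric multiplicity $\mu\geq 2$ means precisely that $\dim\ker(A-\lambda_0 I)\geq 2$, i.e.
\[
\operatorname{rank}(A-\lambda_0 I)= n-\mu\leq n-2 .
\]

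Next I would recall the rank form of the PBH lemma: the pair $(A,B)$ is reachable if and only if $\operatorname{rank}[\,A-\lambda I \mid B\,]=n$ for \emph{every} $\lambda\in\mathbb{C}$, so that failure at a single value $\lambda_0$ already forces unreachability. The crux is that $B\in\real^n$ is a \emph{single} column, so appending it to $A-\lambda_0 I$ can raise the rank by at most one. Hence
\[
\operatorname{rank}[\,A-\lambda_0 I \mid B\,]\le \operatorname{rank}(A-\lambda_0 I)+1\le (n-2)+1 = n-1 < n,
\]
the PBH condition fails at $\lambda=\lambda_0$, and $(A,B)$ cannot be reachable.

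The observability claim follows by the dual argument: since $C\in\real^{1\times n}$ is a single row, appending it to $A-\lambda_0 I$ again raises the rank by at most one, so the stacked matrix $\left[\begin{smallmatrix} C \\ A-\lambda_0 I\end{smallmatrix}\right]$ has rank at most $n-1$, violating the observability PBH test at $\lambda_0$. Equivalently, I could invoke the duality $(A,C)$ observable $\iff (A^T,C^T)$ reachable together with $\operatorname{rank}(A^T-\lambda_0 I)=\operatorname{rank}(A-\lambda_0 I)$, so that $A^T$ shares the eigenvalue $\lambda_0$ with the same geometric multiplicity.

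I do not expect a genuine obstacle here; the one subtlety worth stressing is that the result hinges on $B$ (resp.\ $C$) being single-input (single-output): it is precisely because one column (row) can increase the rank by at most one that a rank deficiency of size $\mu\geq 2$ cannot be repaired, whereas with $\mu$ or more inputs reachability would again be possible. Should an eigenvector formulation in the spirit of Corollary~\ref{cor:PBH_eigvec} be preferred, the same conclusion is reached by noting that the left-eigenspace $\{w : w^T(A-\lambda_0 I)=0\}$ has dimension at least two, so the single scalar constraint $w^T B=0$ still admits a nonzero solution $w$, which exhibits an unreachable direction.
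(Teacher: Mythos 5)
Your proof is correct: the rank--nullity identity $\operatorname{rank}(A-\lambda_0 I)=n-\mu\le n-2$, the observation that a single column (row) can raise the rank by at most one, and the PBH test together give the conclusion immediately, and your left-eigenvector variant is the form closest to Corollary~\ref{cor:PBH_eigvec} as used elsewhere in the paper. Note that the paper itself offers no proof of this lemma --- it is stated as a known result with a citation to a linear systems textbook --- so your argument simply supplies the standard justification that the authors chose to omit; there is nothing to reconcile between the two.
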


As shown in Appendix~\ref{subsec:circul_mat}, all but at most two eigenvalues of
the Laplacian of the cycle have geometric multiplicity two. Thus, applying the
previous lemma next proposition follows.

\begin{proposition}
  A cycle graph is not completely reachable (observable) from a single node for
  any choice of the control (observation) node. Furthermore,
  $\lceil\frac{n-1}{2}\rceil$ eigenvalues are unreachable
  (unobservable).  
\end{proposition}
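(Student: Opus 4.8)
The plan is to combine the preceding multiplicity lemma with the explicit spectrum of the circulant Laplacian of the cycle. As recalled in Appendix~\ref{subsec:circul_mat}, the Laplacian of a cycle on $n$ nodes is circulant, so its eigenvalues have the closed form $\lambda_k = 2 - 2\cos\!\big(\tfrac{2\pi k}{n}\big)$ for $k\in\{0,\ldots,n-1\}$, and since $\cos\!\big(\tfrac{2\pi k}{n}\big) = \cos\!\big(\tfrac{2\pi(n-k)}{n}\big)$ one has $\lambda_k = \lambda_{n-k}$. First I would observe that, for every $n\ge 3$, this pairing attaches at least one eigenvalue to two \emph{distinct} indices $k\neq n-k$, so that eigenvalue has algebraic multiplicity at least two; because $L$ is symmetric, algebraic and geometric multiplicities coincide, hence its geometric multiplicity is (at least) two. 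Fixing any one such eigenvalue and any single control (observation) node, the preceding lemma of \cite{PJA-ANM:94} applies with $B=e_j$ (respectively $C=e_j^T$) and yields unreachability (unobservability). Since the node was arbitrary, this already proves the first assertion.

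For the count I would enumerate the repeated eigenvalues through the involution $k\mapsto n-k$ on $\{0,\ldots,n-1\}$. Its fixed points are exactly $k=0$ (for every $n$) and $k=n/2$ (only when $n$ is even); at these points the eigenvalues $\lambda_0=0$ and, in the even case, $\lambda_{n/2}=4$ are simple, while every remaining index lies in a two-element orbit, each orbit contributing one distinct eigenvalue of geometric multiplicity two. To conclude that the unreachable eigenvalues are \emph{precisely} these repeated ones, I would still rule out that a simple eigenvalue becomes unreachable: the eigenvector of $\lambda_0$ is $\mathbf{1}$ and that of $\lambda_{n/2}$ is the alternating vector $[1~-1~\cdots~1~-1]^T$, neither of which has a vanishing entry, so $e_j^T v\neq 0$ for every $j$ and, by Corollary~\ref{cor:PBH_eigvec}, each simple eigenvalue is reachable from any single node. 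The number of unreachable eigenvalues therefore equals the number of two-element orbits of $k\mapsto n-k$.

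The main obstacle is exactly this final orbit count, which is where the $\ceil{\tfrac{n-1}{2}}$ of the statement has to be read off and where the odd and even cases genuinely diverge: an extra simple eigenvalue $\lambda_{n/2}=4$ is present only for even $n$, so the careful bookkeeping is whether that value of $k$ is excluded from the paired indices. Everything else---the reduction to a single shared eigenvalue, the passage from algebraic to geometric multiplicity via symmetry, and the dismissal of $\lambda_0$ and $\lambda_{n/2}$ by their nowhere-vanishing eigenvectors---is routine once the circulant spectrum from the appendix is in hand.
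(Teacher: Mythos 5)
Your approach is the same as the paper's: take the circulant spectrum $\lambda_k = 2-2\cos\big(\tfrac{2\pi k}{n}\big)$ from Appendix~\ref{subsec:circul_mat}, note the pairing $\lambda_k=\lambda_{n-k}$, and apply the multiplicity lemma of \cite{PJA-ANM:94} with $B=e_j$ to conclude unreachability from any single node. That part is complete and correct, and your added step of checking that the simple eigenvalues $\lambda_0=0$ and (for even $n$) $\lambda_{n/2}=4$ are \emph{reachable} --- because their one-dimensional eigenspaces are spanned by $\mathbf{1}$ and by the alternating vector, neither of which has a vanishing entry --- is actually more careful than what the paper writes.

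The gap is the final count, and it is not the routine bookkeeping you defer it as: carried out, your own orbit count contradicts the stated formula when $n$ is even. For odd $n$ the involution $k\mapsto n-k$ on $\{0,\ldots,n-1\}$ has the single fixed point $k=0$ and $\tfrac{n-1}{2}$ two-element orbits, matching $\lceil\tfrac{n-1}{2}\rceil$. For even $n$ it has two fixed points, leaving $\tfrac{n-2}{2}=\tfrac{n}{2}-1$ orbits, whereas $\lceil\tfrac{n-1}{2}\rceil=\tfrac{n}{2}$; already for $n=4$ the spectrum is $\{0,2,2,4\}$ and only $\lambda=2$ is unreachable, not two eigenvalues. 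The paper's proof reaches $\lceil\tfrac{n-1}{2}\rceil$ by asserting that the unreachable eigenvalues are ``all but the zero eigenvalue,'' i.e.\ by counting $\lambda_{n/2}=4$ among them --- exactly the eigenvalue your nowhere-zero-eigenvector argument shows to be reachable from every node. So you must either find a flaw in that reachability argument (there is none) or conclude that for even $n$ the correct count is $\tfrac{n}{2}-1$, i.e.\ $\lfloor\tfrac{n-1}{2}\rfloor$ rather than $\lceil\tfrac{n-1}{2}\rceil$. As written, your proposal stops precisely where this discrepancy surfaces, so it does not yet establish the second assertion of the proposition.
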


\begin{proof}
  The unreachability (unobservability) follows directly by noting that all but
  at most two eigenvalues of the cycle have geometric multiplicity two and by
  applying the previous lemma. By using the PBH lemma, it follows straight that
  the unreachable (unobservable) eigenvalues are all but the zero eigenvalue.
\end{proof}

Recall that, without loss of generality, we label the nodes of the cycle so that
the undirected edges are $(i, (i\,\text{mod}\,(n) + 1))$ for
$i\in\until{n}$. Following the same line as in the reachability (observability)
analysis of path graphs, we can prove the following lemma.

\begin{lemma}
  \label{lmm:share_eig_cycle}
  A cycle graph of length $n$ is reachable (observable) from the set of nodes
  $I_o = \{i_1, \ldots, i_m\}$ if and only if the matrices $\Lm_{i_2 - i_1 -
    1}$, $\ldots$, $\Lm_{i_m - i_{m-1} - 1}$ and
  $\Lm_{(i_1-i_m-1)\textrm{mod}n}$ do not have common eigenvalues. 
\end{lemma}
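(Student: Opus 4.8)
The plan is to mirror the path-graph argument of Lemma~\ref{lem:share_eig_mimo}, replacing the two end blocks of type $\Lu$ by blocks of type $\Lm$ (every node of a cycle has degree two, so every diagonal block carries a $2$ on its diagonal), and then to pay special attention to the single feature that genuinely distinguishes a cycle from a path: the chain of coupling conditions closes up into a loop. By Corollary~\ref{cor:PBH_eigvec}, the cycle is unreachable (unobservable) from $I_o=\{i_1,\dots,i_m\}$ if and only if its Laplacian $L$ admits a nonzero eigenvector $v$ with $(v)_{i_1}=\dots=(v)_{i_m}=0$. Reading the arcs in cyclic order and using the partition of the cycle Laplacian noted in the Remark on partitioning the cycle, the equation $Lv=\lambda v$ decouples, on the complement of $I_o$, into the per-arc eigenvalue equations
\[
\Lm_{i_2-i_1-1}v_1=\lambda v_1,\ \dots,\ \Lm_{i_m-i_{m-1}-1}v_{m-1}=\lambda v_{m-1},\ \Lm_{(i_1-i_m-1)\,\textrm{mod}\,n}\,v_m=\lambda v_m,
\]
where $v_j$ collects the components of $v$ on the $j$-th arc, together with one coupling condition per deleted node, of the form $(v_j)_{\mathrm{last}}+(v_{j+1})_{\mathrm{first}}=0$, with indices read cyclically ($v_{m+1}:=v_1$).

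For the direction ``no common eigenvalue $\Rightarrow$ reachable'' I would argue by contraposition exactly as in the path case. Suppose the cycle is unreachable, so such a $v\neq 0$ exists. At least one arc vector, say $v_j$, is nonzero; as an eigenvector of the Jacobi matrix $\Lm_{\mu_j}$ it has, by Lemma~\ref{lem:nonzero_comp}, nonzero first and last components. The coupling condition at the adjacent deleted node then forces the boundary component of the neighbouring arc to be nonzero, hence (again by Lemma~\ref{lem:nonzero_comp}) forces that entire neighbouring arc vector to be a nonzero $\lambda$-eigenvector of its own block. Propagating this around the ring shows that every $v_j$ is a nonzero $\lambda$-eigenvector of its block, so $\lambda$ is a common eigenvalue of all the listed matrices. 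Equivalently, a single zero arc would, by the same propagation, kill every arc and give $v=0$, a contradiction. This step is a routine transcription of the path argument.

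For the converse, ``common eigenvalue $\Rightarrow$ unreachable,'' I would attempt the path-style construction: pick for each block a $\lambda$-eigenvector, scale the $j$-th one by $\rho_j\in\real$, and choose the $\rho_j$ to satisfy the coupling conditions. Writing $a_j$ and $b_j$ for the (nonzero) first and last components of the chosen eigenvector of the $j$-th block, the coupling conditions read $\rho_j b_j+\rho_{j+1}a_{j+1}=0$ and solve sequentially as $\rho_{j+1}=-\rho_j\,b_j/a_{j+1}$. Here lies the essential departure from the path: because the deleted nodes sit on a cycle there are $m$ such conditions for the $m$ unknowns $\rho_j$, and after going once around the ring one is left with the closure condition $\prod_{j=1}^m (b_j/a_j)=(-1)^m$. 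In the open path this difficulty never arises, since the two external blocks leave the chain open and the scalings can always be completed freely.

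The hard part is therefore this closure condition, which I expect to be the crux. I would evaluate it using the closed-form sine eigenvectors of $\Lm_\mu$ from Proposition~\ref{prop:eigs_Lu_Lm}: if the common eigenvalue occurs in the $j$-th block at index $k_j$, a short computation gives $b_j/a_j=(-1)^{k_j+1}$, so the closure condition collapses to the parity statement ``$\sum_{j=1}^m k_j$ even.'' The delicate point to verify is then that a common eigenvalue of the listed matrices can actually be realized by indices $k_j$ meeting this parity constraint; I would scrutinize this step most carefully, since in some configurations (for instance blocks $\Lm_1$ and $\Lm_3$, whose only shared eigenvalue is $2$, forcing $k_1+k_2=1+2$ odd) the parity fails and no vanishing eigenvector exists. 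Carrying out this parity bookkeeping — and, where needed, recording it explicitly in the statement — is where the real work of the cycle case lies; everything else follows the path-graph proof verbatim.
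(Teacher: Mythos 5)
Your decomposition into $\Lm$-blocks joined by cyclic coupling conditions is exactly the paper's route (the paper sets $i_1=1$ and writes the same system), and your first direction --- unreachable $\Rightarrow$ common eigenvalue, propagating nonzero boundary components around the ring via Lemma~\ref{lem:nonzero_comp} --- is precisely the paper's argument. Where you and the paper part ways is the converse, and you are right to dwell on it: the paper dismisses it with ``it follows straight that $v$ is a (nonzero) solution if and only if all the $v_j$ are nonzero eigenvectors \dots with common eigenvalue $\lambda$,'' i.e.\ it never verifies that the $m$ cyclic coupling conditions can be satisfied simultaneously by rescaling the block eigenvectors. Your closure condition is exactly the missing check; your computation $b_j/a_j=(-1)^{k_j+1}$ from the sine eigenvectors in Proposition~\ref{prop:eigs_Lu_Lm} is correct; and the resulting parity constraint ``$\sum_j k_j$ even'' is a genuine obstruction, not a removable technicality.

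Your suspicion that the statement itself needs amending is also correct, and your own example witnesses it once embedded in a cycle: take $n=6$ and $I_o=\{1,3\}$, so the blocks are $\Lm_1=[2]$ and $\Lm_3$, which share the eigenvalue $2$ (at indices $k_1=1$, $k_2=2$, so $k_1+k_2$ is odd). The lemma then asserts unobservability, but a direct check of $Lv=\lambda v$ with $(v)_1=(v)_3=0$ forces $v=0$: the $\lambda=2$ eigenvector of $\Lm_3$ is $(d,0,-d)$ and that of $\Lm_1$ is $(c)$, and the two coupling conditions $c+d=0$ and $c-d=0$ give $c=d=0$ (indeed $2$ is not even in the spectrum $\{0,1,1,3,3,4\}$ of the $C_6$ Laplacian). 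So the ``common eigenvalue $\Rightarrow$ unreachable'' half of the lemma is false as stated, and the same example falsifies the $GCD$ criterion of the cycle theorem, since $GCD(2,4)=2$ yet the pair $\{1,3\}$ is observable. Your plan is therefore the right one but cannot terminate in a proof of the stated lemma; what remains is to finish the parity bookkeeping (writing the common eigenvalue as $2-2\cos(a\pi/b)$ with $a/b$ in lowest terms, one has $k_j=a(\mu_j+1)/b$ and $\sum_j k_j=an/b$, so the obstruction is whether $an/b$ can be made even) and to restate the lemma and its downstream corollaries accordingly.
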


\begin{proof}
  Without loss of generality, we set node $i_1=1$, so that the $\Lm$ matrices in
  the statement of the theorem become $\Lm_{i_2 - i_1 - 1}$, $\ldots$, $\Lm_{i_m
    - i_{m-1} - 1}$ and $\Lm_{n-i_m}$. Following the same lines of the proof of
  Lemma~\ref{lem:share_eig_mimo}, loss of reachability (observability) is
  equivalent to the existence of a nonzero solution $v = [0 \;v_{1}^T \;0
  \;v_{2}^T \ldots \;0 \;v_{m}^T]^T$ to:
  \[
  \begin{split}
    (v_{1})_{1} +  (v_{m})_{n-i_m}  &= 0\\
    M_{i_2-i_1-1} v_{1} &= \lambda v_{1}\\
    &\vdots  \\
    (v_{m-1})_{i_m-1} + (v_{m} )_{1} &= 0\\
    M_{n-i_m} v_{m}&= \lambda v_{m}.
  \end{split}
  \]
  Now, if $v_j = 0$ for some $j\in\until{m}$, using Lemma~\ref{lem:nonzero_comp}
  the only solution of the above system is $v=0$. With this condition in hand,
  it follows straight that $v$ is a (nonzero) solution if and only if all the
  $v_j$ are nonzero eigevectors of $M_{i_2 - i_1-1}, \ldots, M_{n-i_m}$ with
  common eigenvalue $\lambda$, thus concluding the proof.
\end{proof}

It is worth noting that, due to the symmetry of the cycle, the reachability
(observability) properties are determined by the relative distance between each
pair of consecutive control (observation) nodes. The following theorem parallels
Theorem~\ref{thm:main_thm_path}. As for the path, we state the theorem for cycle
graphs of length $n$, where $n$ has a prime factorization with distinct prime
factors. The general case follows straight from similar arguments as in
Remark~\ref{rmk:general_main_thm_path}.

\begin{theorem}[Cycle reachability and observability]
  Given a cycle graph of length $n$, let $n = \prod_{\nu =1}^k p_\nu$ be a prime
  number factorization for some $k\in \natural$ and distinct prime numbers $p_1,
  \ldots, p_{k}$ (including the integer $2$). The following statements hold:
  \begin{enumerate}
  \item \label{pt:cond_unobs_cycle} the cycle graph is reachable (observable)
    from the set of nodes $I_o = \{i_1, \ldots, i_m\}$ if and only if
    \begin{equation}
      GCD\big( (i_2 - i_1), (i_3 - i_2), \ldots, (n+i_1-i_m)\big) = 1;
      \label{eq:GCD_cycle}
    \end{equation}
  \item for each prime factor $p$ of $n$ and for each fixed
    $\kappa\in\until{p}$, the set of nodes $I_o^p = \{\kappa + \ell p\}_{\ell\in
      \{0, \ldots, \frac{n}{p} - 1\}}$, is unreachable (unobservable) with the
    following unreachable (unobservable) eigenvalues
    \begin{equation}
      \lambda_{\nu} = 2-2\cos\left(\nu \frac{\pi}{p}\right), \qquad \nu\in\until{p-1}
      \label{eq:unobs_eigs_thm_cycle}
    \end{equation}
    and, for $\kappa=1$,
    eigenvectors 
    \begin{equation}
      \begin{split}
        V_{\nu} = \begin{bmatrix} 0 &w_{\nu}^T &\ldots &0
          &w_{\nu}^T\end{bmatrix}^T,
      \end{split}
      \label{eq:unobs_eigvecs_thm_cycle}
    \end{equation}
    where $w_{\nu}\in\real^{(p-1)}$ is the eigenvector of $\Lm_{(p-1)}$
    corresponding to the eigenvalue $\lambda_{\nu}$.
  \item if a set of control (observation) nodes $I_o$ with cardinality greater
    than $1$, satisfies
    \[
    GCD\big( (i_2 - i_1), (i_3 - i_2), \ldots, (n+i_1-i_m)\big) = q_1 \cdot
    \ldots \cdot q_l
    \]
    where $q_{1}\neq \ldots \neq q_{l}$ are $l\leq k$ distinct prime factors of
    $n$, then the set of unreachable (unobservable) eigenvalues from $I_o$ is
    given by
    \begin{equation*}
      \lambda_{\nu} = 2-2\cos\left(\nu \frac{\pi}{q_1\cdot\ldots\cdot q_l}\right), \qquad \nu\in\until{(q_1\cdot\ldots\cdot q_l)-1}.
    \end{equation*}
    Also, without loss of generality, setting $i_1=1$, the orthogonal complement
    to the reachable subspace, $(X_r)^\perp$, (respectively the unobservable
    subspace, $X_{no}$) is spanned by all the corresponding eigenvectors
    \begin{equation*}
      \begin{split}
        V_{\nu} = \begin{bmatrix} 0 &w_{\nu}^T &\ldots &0
          &w_{\nu}^T\end{bmatrix}^T,
      \end{split}
    \end{equation*}
    where $w_{\nu}\in\real^{(q_1\cdot\ldots\cdot q_l)-1}$ is the eigenvector of
    $\Lm_{(q_1\cdot\ldots\cdot q_l)-1}$ corresponding to the eigenvalue
    $\lambda_{\nu}$.  
  \end{enumerate}
\end{theorem}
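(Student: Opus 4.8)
The plan is to reduce every claim to the submatrix spectral data of Proposition~\ref{prop:eigs_Lu_Lm} by way of Lemma~\ref{lmm:share_eig_cycle}, in close parallel to the path analysis. Setting $i_1=1$ without loss of generality, I write the $m$ cyclic gaps as $d_1=i_2-i_1,\ \ldots,\ d_{m-1}=i_m-i_{m-1},\ d_m=n+i_1-i_m$, so that $\sum_{s=1}^m d_s=n$ and the matrices appearing in Lemma~\ref{lmm:share_eig_cycle} are $\Lm_{d_1-1},\ldots,\Lm_{d_m-1}$. By that lemma the cycle is unreachable (unobservable) from $I_o$ if and only if these $m$ matrices share a common eigenvalue. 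For statement~\ref{pt:cond_unobs_cycle} I substitute the closed form $\lambda_{\Lm_{d_s-1}}=2-2\cos(k_s\pi/d_s)$, $k_s\in\until{d_s-1}$. Since each argument lies in $(0,\pi)$, where cosine is injective, $\Lm_{d_s-1}$ and $\Lm_{d_t-1}$ share an eigenvalue exactly when $k_s/d_s=k_t/d_t$; hence a common eigenvalue of all blocks is a single rational $\theta\in(0,1)$ representable as $k_s/d_s$ for every $s$. Writing $\theta=a/b$ in lowest terms forces $b\mid d_s$ for all $s$ and $b\geq 2$, so such a $\theta$ exists if and only if $GCD(d_1,\ldots,d_m)>1$. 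Negating yields the reachability criterion $GCD\big((i_2-i_1),\ldots,(n+i_1-i_m)\big)=1$.

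Statement (ii) is the extreme case in which every gap equals $p$: all blocks coincide with $\Lm_{p-1}$, they trivially share their entire (simple) spectrum $\lambda_\nu=2-2\cos(\nu\pi/p)$, $\nu\in\until{p-1}$, and these are the unreachable eigenvalues. For the eigenvectors I would use the block system in the proof of Lemma~\ref{lmm:share_eig_cycle}: simplicity of $\lambda_\nu$ forces each block to be a scalar multiple $\alpha_s w_\nu$ of the $\Lm_{p-1}$-eigenvector with $(w_\nu)_j=\sin(j\nu\pi/p)$, and the coupling equations $(\alpha_s w_\nu)_{p-1}+(\alpha_{s+1}w_\nu)_1=0$ determine the $\alpha_s$ recursively. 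Evaluating $(w_\nu)_{p-1}=(-1)^{\nu+1}(w_\nu)_1$ from the sine formula gives $\alpha_{s+1}=(-1)^\nu\alpha_s$, and normalizing $\alpha_1=1$ produces the repeated-block eigenvector of the announced form.

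For statement (iii) I would rerun the rational argument for a general $I_o$ whose gaps satisfy $GCD(d_1,\ldots,d_m)=g=q_1\cdots q_l$. Common eigenvalues of the $\Lm_{d_s-1}$ correspond to rationals $\theta=a/b\in(0,1)$ with $b\mid d_s$ for all $s$, hence $b\mid g$; as $b$ ranges over the divisors of $g$ these rationals are exactly $\{\nu/g:\nu\in\until{g-1}\}$, giving the unreachable eigenvalues $\lambda_\nu=2-2\cos(\nu\pi/g)$. Here $g\mid d_s$ guarantees that each candidate $k_s=\nu\,d_s/g$ is an integer in the admissible range, so every such $\lambda_\nu$ is genuinely shared. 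The eigenvectors are then assembled blockwise as in (ii) (with $w_\nu$ the $\Lm_{g-1}$-eigenvector when $i_1=1$), and since these exhaust the solutions of the block system they span $X_{no}$ (respectively $(X_r)^\perp$).

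The part requiring the most care is closing the cycle: unlike the path, the chain of coupling equations wraps around and imposes the extra constraint $(\alpha_m w_\nu)_{\text{last}}+(\alpha_1 w_\nu)_{\text{first}}=0$. One must check that the recursion for the $\alpha_s$ is consistent after a full loop, i.e. that the cyclic product of the coupling ratios equals $1$; this is precisely where the parity of $\nu\,(n/p)$ (equivalently, whether $\lambda_\nu$ is a genuine Laplacian eigenvalue of the cycle) enters, dictating both which $\nu$ effectively contribute and the exact sign pattern of the blocks. I expect this wrap-around consistency, together with the attendant sign bookkeeping in the eigenvectors, to be the main obstacle; the number-theoretic reduction in~\ref{pt:cond_unobs_cycle} and the eigenvalue counts in (ii)--(iii) are then routine consequences of Proposition~\ref{prop:eigs_Lu_Lm}.
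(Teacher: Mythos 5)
Your reduction is the same one the paper uses (Lemma~\ref{lmm:share_eig_cycle} plus the closed-form spectra of Proposition~\ref{prop:eigs_Lu_Lm}, then the rational/GCD argument), and your part (i) is, as a deduction \emph{from that lemma}, complete and cleaner than the paper's sketch. But the wrap-around consistency that you flag and then defer is not bookkeeping you can postpone: it is the point where the argument (and, as it happens, the paper's own proof and even the literal statement) breaks. Carrying out your own recursion around the cycle, with $g=GCD(d_1,\ldots,d_m)$ and $k_s=\nu d_s/g$, the coupling ratios give $\alpha_{s+1}=(-1)^{k_s}\alpha_s$, so closing the loop forces $(-1)^{\sum_s k_s}=(-1)^{\nu n/g}=1$. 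A shared eigenvalue of the blocks therefore produces an unreachable eigenvector \emph{only when} $\nu n/g$ is even; when this fails for every shared eigenvalue, the cycle is reachable even though the blocks have common spectrum, so Lemma~\ref{lmm:share_eig_cycle} cannot be invoked as a black box (its proof in the paper silently drops the wrap-around coupling equation, exactly the equation you identified).

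This is not a vacuous worry. Take $n=6$ and $I_o=\{1,3\}$: the gaps are $2$ and $4$, $GCD=2$, and $\Lm_1=[2]$ and $\Lm_3$ share the eigenvalue $2$; yet $2$ is not an eigenvalue of the cycle Laplacian $L_6$ (spectrum $\{0,1,1,3,3,4\}$), and a direct check shows any eigenvector of $L_6$ vanishing at nodes $1$ and $3$ is zero, so the cycle \emph{is} observable there --- contradicting \eqref{eq:GCD_cycle}. Likewise in statement (ii) with $n=15$, $p=3$: the claimed unobservable eigenvalue $\lambda_1=2-2\cos(\pi/3)=1$ is not even an eigenvalue of $L_{15}$, because $\nu n/p=5$ is odd; only $\nu=2$ survives. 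Your observation that $(w_\nu)_{p-1}=(-1)^{\nu+1}(w_\nu)_1$ also shows the displayed eigenvector with all blocks equal to $+w_\nu$ is only correct for even $\nu$; for odd surviving $\nu$ the blocks must alternate in sign. So to turn your proposal into a proof you must (a) prove a corrected version of Lemma~\ref{lmm:share_eig_cycle} that retains the cyclic coupling equation, and (b) restrict the eigenvalue lists in (ii) and (iii) to the indices $\nu$ with $\nu n/g$ even, adjusting the sign pattern of the eigenvectors accordingly. As written, both your argument and the paper's stop exactly one step short of this, and that step changes the answer.
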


\begin{proof}
  We provide just a sketch of the proof since it follows the same line as the
  proof of Theorem~\ref{thm:main_thm_path}.
  Statement~(i) is proven by using Lemma~\ref{lmm:share_eig_cycle} and the
  structure of the eigenvalues of the $M$ matrices given in
  Proposition~\ref{prop:eigs_Lu_Lm} with the same argument as in
  Theorem~\ref{thm:main_thm_path} (i) and (ii).
  To prove statement~(ii), we start observing that the set of nodes $I_o^p = \{
  \kappa + \ell p\}_{\ell\in\until{\frac{n}{p}}}$, is the set of all nodes
  satisfying condition in (i) for a given $p\in\{p_1, \ldots, p_k\}$. Using
  Lemma~\ref{lmm:share_eig_cycle}, we have that the unreachable (unobservable)
  eigenvalues from this set of nodes are the eigenvalues of $\Lm_{p-1}$. The
  proof follows by the same arguments as in Theorem~\ref{thm:main_thm_path}
  (iii).
  Finally, statement~(iii) follows straight.
\end{proof}

Next corollaries provide respectively an easy way to choose two control
(observation) nodes to get reachability (observability) for any cycle length and
the class of cycle graphs (lengths) for which reachability (observability) is
guaranteed for any pair of nodes.
\begin{corollary}\label{cor:cycle1}
  Any cycle graph is reachable (observable) from two adjacent nodes.\oprocend
\end{corollary}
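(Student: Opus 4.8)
The plan is to derive this corollary directly from statement~(i) of the preceding theorem, which characterizes reachability (observability) of a cycle from a set of nodes via a greatest common divisor condition. The key observation is that two adjacent nodes correspond to the simplest possible instance of the GCD test, so the argument should reduce to checking a single numerical condition.

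First I would take the set of observation nodes to be two adjacent nodes, which by the labeling convention of the cycle means $I_o = \{i_1, i_2\}$ with $i_2 = i_1 + 1$ (or, using the symmetry of the cycle noted just before the theorem, simply $I_o = \{1, 2\}$). With $m = 2$, the reachability condition in statement~(i) of the theorem becomes
\[
GCD\big( (i_2 - i_1),\; (n + i_1 - i_2) \big) = 1.
\]
Substituting $i_2 - i_1 = 1$, the two arguments of the GCD are $1$ and $n - 1$. Since $GCD(1, n-1) = 1$ for every $n \in \natural$, the condition is satisfied unconditionally, and hence the cycle is reachable (observable) from the two adjacent nodes for any cycle length $n$.

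The main (and only) subtlety will be correctly interpreting the cyclic distance arguments of the GCD when the two chosen indices wrap around the cycle — i.e.\ ensuring that the pair of ``gaps'' between the two consecutive observation nodes is indeed $\{1, n-1\}$ rather than some other pair. This is handled cleanly by invoking the remark immediately preceding the theorem, which states that reachability depends only on the relative distances between consecutive control (observation) nodes; hence without loss of generality we may set $i_1 = 1$, $i_2 = 2$, so the two gaps are exactly $1$ (from node $1$ to node $2$) and $n - 1$ (from node $2$ back around to node $1$). Since $1$ divides every integer, no arithmetic beyond this identification is required, and the corollary follows. I expect no genuine obstacle here: the entire content is already packaged in Theorem~\ref{thm:main_thm_path}'s cycle counterpart, and the corollary is essentially the observation that adjacency forces one gap to equal $1$, trivializing the coprimality requirement.
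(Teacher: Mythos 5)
Your proposal is correct and is exactly the argument the paper intends: the corollary is stated without proof precisely because it follows immediately from statement~(i) of the cycle theorem, since adjacency gives $GCD(1,\,n-1)=1$. Your handling of the wrap-around gap via the relative-distance remark is the right (and only) point needing care, and you address it properly.
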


\begin{corollary}
  A cycle graph of length $n$ is reachable (observable) from any pair of nodes
  if and only if $n$ is prime. \oprocend
\end{corollary}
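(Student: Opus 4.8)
The plan is to derive the corollary directly from statement~\ref{pt:cond_unobs_cycle} of the preceding theorem, specialized to the case of exactly two observation nodes. For a pair $I_o = \{i_1, i_2\}$ with $i_1 < i_2$, the general coprimality condition~\eqref{eq:GCD_cycle} collapses to the single equation $GCD(i_2 - i_1,\, n + i_1 - i_2) = 1$. Writing $d = i_2 - i_1 \in \fromto{1}{n-1}$ for the gap between the two nodes, the two arguments of the $GCD$ are $d$ and $n - d$, so the cycle is reachable (observable) from the pair if and only if $GCD(d, n-d) = 1$.

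The first key step is the elementary number-theoretic identity $GCD(d, n-d) = GCD(d, n)$, which holds because adding one argument to the other leaves the greatest common divisor unchanged. Hence reachability from a pair of nodes at gap $d$ is equivalent to $d$ being coprime to $n$. Since, as the pair ranges over all pairs of distinct nodes, the gap $d$ ranges over all of $\fromto{1}{n-1}$, the corollary reduces to the purely arithmetic claim that $GCD(d, n) = 1$ holds for every $d \in \fromto{1}{n-1}$ if and only if $n$ is prime.

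For the forward implication I would assume $n$ is prime. Then for any $d$ with $1 \le d \le n-1$ the only positive divisors of $n$ are $1$ and $n$, and since $d < n$ the divisor $n$ cannot divide $d$; thus $GCD(d, n) = 1$, and by the reduction above the cycle is reachable from every pair of nodes. For the converse I would argue by contraposition: if $n$ is composite it admits a divisor $d$ with $1 < d < n$, and choosing any pair of nodes at gap $d$ (for instance $i_1 = 1$, $i_2 = 1 + d$) gives $GCD(d, n) = d > 1$, so the cycle fails to be reachable from that pair.

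No step presents a genuine obstacle; the only points requiring a moment's care are the reduction $GCD(d, n-d) = GCD(d, n)$ and the observation that the admissible gaps exhaust $\fromto{1}{n-1}$, so that \emph{reachable from any pair} is exactly \emph{coprime to $n$ for every admissible gap}.
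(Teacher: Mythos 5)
Your proof is correct and follows essentially the same route the paper intends: the corollary is stated as an immediate consequence of statement~\ref{pt:cond_unobs_cycle} of the cycle theorem, and your specialization of the condition~\eqref{eq:GCD_cycle} to a pair of nodes, the reduction $GCD(d,n-d)=GCD(d,n)$, and the observation that every gap $d\in\fromto{1}{n-1}$ is realized by some pair, together constitute exactly the elementary argument the authors leave implicit. No discrepancy with the paper's approach.
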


As for the path, we provide a simple routine giving a graphical interpretation
of the results of the theorem. Again, we give the procedure for the case of
simple factors.

We will mark each unreachable (unobservable) node with a different symbol. Let
$n = \prod_{\nu =1}^k p_\nu$ for some $k\in\natural$ and $p_1, \ldots, p_k$
distinct prime integers (here we include $2$ among the $p_\nu$ as well). At the
beginning of the procedure all the nodes are initialized without any symbol. For
any $p_\nu$, $\nu\in\until{k}$ and $i\in \until{{p_\nu}}$, partition the nodes
into $n / p_\nu$ groups of $p_\nu$ nodes and assign the same symbol to all the
nodes in position $i + k\cdot p_\nu$, $j\in\until{\frac{n}{p_\nu}-1}$. Nodes
with the same symbol have the same unreachable (unobservable) eigenvectors, in
the sense that controlling (observing from) all the nodes at the same time gives
the same unreachable (unobservable) eigenvectors. A set of nodes from which the
cycle is reachable (observable) is obtained by selecting any subset of nodes
having no symbols in common.

In Figure~\ref{fig:cycle15} there are two symbols for each node. This is because
$n=15=3\cdot 5$. Symbols closer to the nodes have periodicity $5$ and the others
have periodicity $3$. Notice the ease of design using the above procedure. For
example $\{4, 13\}$ and $\{8, 14\}$ are unreachable (unobservable) pairs since
they share respectively the square and the parallelogram, while $\{2, 13\}$ and
$\{5, 12\}$ are reachable (observable) pairs. Finally, notice that two
neighboring nodes have always different symbols in accordance with the result in
Corollary~\ref{cor:cycle1}.

\begin{figure}[htbp]
  \centering
  \includegraphics[height=.35\linewidth]{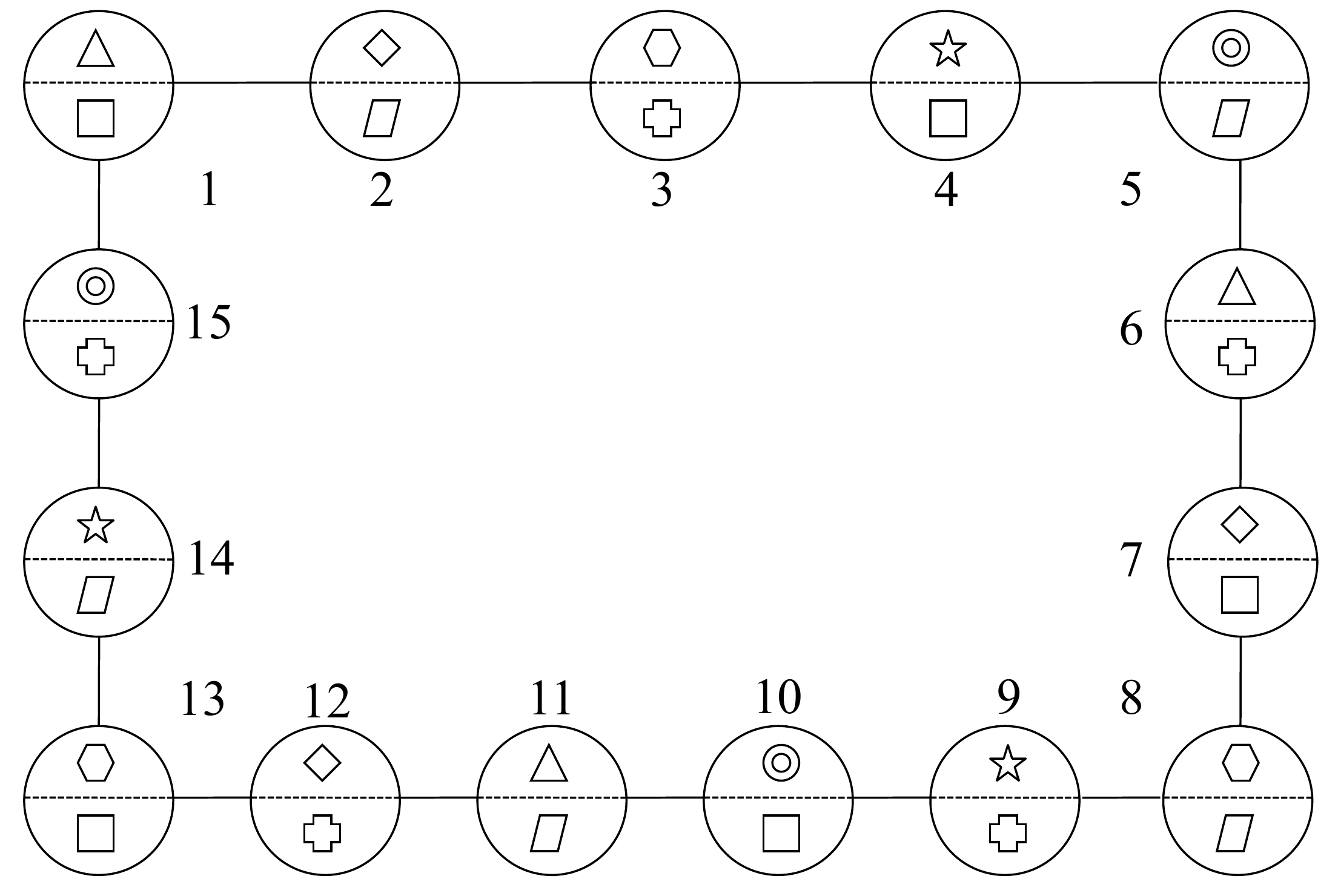}%
  \caption{Graphical interpretation of the observability of a cycle with $15$
    nodes.}
  \label{fig:cycle15}
\end{figure}

\section{Conclusions}
\label{sec:conclusions}
In this paper we have characterized the reachability (observability) of path and
cycle graphs in terms of simple algebraic rules from number theory. In
particular, we have shown what are all and only the unreachable (unobservable)
set of nodes and provided simple routines to select a set of control
(observation) nodes that guarantee reachability (observability).

Promising avenues for further research include the extension of the proposed
methodologies to more complex graphs having paths and cycles as constitutive
graphs (e.g., grid, torus and cylinder graphs).

\appendix

\subsection{Adjacency matrix of path graphs}
\label{subsec:adjac_path}
Eigenvalues and eigenvectors of the adjacency matrix, $A_p \in\real^{n\times
  n}$, of a path graph of length $n$ can be easily computed \cite{FRG:98}. Here
we briefly summarize some of the steps.

Consider the matrix
\[
A_p=
\begin{bmatrix}
  0       & 1    &0 & \dots     & 0  \\
  1       & 0     & 1  &       &  \vdots  \\
  \vdots  &  \ddots   & \ddots  & \ddots& 1  \\ \\
  0   & \dots & & 1   & 0 \\
\end{bmatrix}
\]

By definition, an eigenvalue $\lambda$ and a corresponding eigenvector $v \in
\real^n$ of $A_p$ satisfy $(v)_{i-1}-\lambda (v)_i +(v)_{i+1}=0$ with
$(v)_0=(v)_{n+1}=0$ and at least one $(v)_i, i\in\until{n}$, nonzero.
Set $(v)_1=a$ and build the sequence $(v)_i$ according to
$(v)_{i+1}=-(v)_{i-1}+\lambda (v)_i$ (e.g. $(v)_2=\lambda a$, $(v)_3=\lambda^2 a
- a $, $(v)_4=\lambda^3 a-2\lambda a$, \ldots, $(v)_\ell=p_\ell(\lambda)
a=(\lambda\cdot p_{\ell-1}(\lambda)-p_{\ell-2}(\lambda))a$).
Imposing $p_{n+1}(\lambda)=0$ one finds $\lambda_k=2\cos\left(k
  \frac{\pi}{n+1}\right)$ as all possible values giving $(v)_{n+1}=0$ with
nonzero $a$.
Now, choose $a_k=\sin\left(k \frac{\pi}{n+1}\right)$, the corresponding
eigenvector can be expressed componentwise $({v_k})_i=\sin \left( i\cdot k
  \frac{\pi}{n+1} \right)$ according the recursive formula above and simple
trigonometric rules.

\subsection{Circulant matrices and eigenstructure of the Laplacian of a cycle
  graph}
\label{subsec:circul_mat}

An $n\times n$ matrix $\ C$ of the form
\[
C=
\begin{bmatrix}
  c_0     & c_{n-1} & \dots  & c_{2} & c_{1}  \\
  c_{1} & c_0    & c_{n-1} &         & c_{2}  \\
  \vdots  & c_{1}& c_0    & \ddots  & \vdots   \\
  c_{n-2}  &        & \ddots & \ddots  & c_{n-1}   \\
  c_{n-1}  & c_{n-2} & \dots  & c_{1} & c_0 \\
\end{bmatrix}
\]
is called a \emph{circulant matrix} \cite{PJD:94}.
A circulant matrix is fully specified by the first column $c=[c_0, \ldots,
c_{n-1}]^T$ of $\ C$. The other columns are obtained by a cyclic permutation of
the first.
The eigenvalues of a circulant matrix can be expressed in terms of the
coefficients $c_0, \ldots, c_{n-1}$ \cite{PJD:94}: $\lambda_j = \sum_{k=0}^{n-1}
\omega^{jk} c_k, \ \omega=e^{i\frac{2\pi}{n}}$, where here $i$ represents the
imaginary unit.

The Laplacian matrix of a cycle graph is a special case of this family
corresponding to $c_0=2$, $c_1=c_{n-1}=-1$, $c_j=0$, $j=2,\ldots,n-2$:
\begin{equation*}
  \label{eigRing}
  \lambda_j =w^{j0}2 - e^{i\frac{2\pi}{n}j(n-1)} - e^{i\frac{2\pi}{n}j}=2-2\cos \left(\frac{2\pi}{n}j\right), \
\end{equation*}
$j=0,1, \ldots, n-1$. Notice that the eigenvalues $\lambda_0=0$ and
$\lambda_{\frac{n}{2}}=4$ (only if $n$ is even) are simple, all the others
verify $ \lambda_j = \lambda_{n-j}$.

\end{document}